\def\binom#1#2{{#1\choose#2}}
\def\Z{{\mathbb Z}}
\def\Zhat{{\widehat{\Z}}}
\def\A{{\mathbb A}}
\def\Nm{{\rm Nm}}
\def\res{{\rm Res}}
\def\Avg{{\rm Avg}}
\def\SL{{\rm SL}}
\def\SO{{\rm SO}}
\def\sq{{\rm sq}}
\def\cO{{\mathcal O}}
\def\cL{{\mathcal L}}
\def\Disc{{\rm Disc}}
\def\Aut{{\rm Aut}}
\def\irr{{\rm irr}}
\def\gen{{\rm gen}}
\def\red{{\rm red}}
\def\Vol{{\rm Vol}}
\def\R{{\mathbb R}}
\def\FF{{\mathcal F}}
\def\RR{{\mathcal R}}
\def\Q{{\mathbb Q}}
\def\V{{\mathcal V}}
\def\Z{{\mathbb Z}}
\def\Q{{\mathbb Q}}
\def\C{{\mathbb C}}
\newcommand*{\ind}{{\rm ind}}
\newcommand*{\ra}{\rightarrow}
\def\tr{{\rm tr}}
\def\Disc{{\rm Disc}}
\newtheorem{theorem}{Theorem}[section]
\newtheorem{thm}[theorem]{Theorem}
\newtheorem{cor}[theorem]{Corollary}
\newtheorem{lemma}[theorem]{Lemma}
\newtheorem{proposition}[theorem]{Proposition}
\newenvironment{proof}{\noindent {\bf Proof:}}{$\Box$ \vspace{2 ex}}
\author{Arul Shankar and Jacob Tsimerman}
\title{Heuristics for the asymptotics of the number of $S_n$-number fields}
\begin{document}
\maketitle
\begin{abstract}
We give a heuristic argument supporting conjectures of Bhargava on the
asymptotics of the number of $S_n$-number fields having bounded
discriminant.  We then make our arguments rigorous in the case $n=3$
giving a new elementary proof of the Davenport-Heilbronn theorem.  Our
basic method is to count elements of small height in $S_n$-fields
while carefully keeping track of the index of the monogenic ring that
they generate.
\end{abstract}

\section{Introduction}

A classical question in analytic number theory is to count the number
of algebraic number fields of bounded discriminant. To make the
question more precise, for a transitive subgroup $G<S_n$, we define
$N(G,X)$ to be the number of degree-$n$ number fields, with
discriminant bounded by $X$ whose Galois closure has Galois group
$G$. There has been much work on the function $N(G,X)$, both
conjectural and unconditional.  It is conjectured by Malle \cite{Malle}
that $N(G,X)\asymp X^a\ln(X)^b$ with precise values for $a,b$. This
conjecture was shown to be incorrect for certain cases by Kluners
\cite{Kl}, and later corrected by Turkelli \cite{Tu}.

Unconditionally, a stronger version of Malle's conjecture (recovering
the asymptotics of $N(G,X)$, and not merely its growth) is known for
abelian groups $G$ by work of Wright \cite{Wright}. For nilpotent
groups $G$, Kluners--Malle \cite{KM} prove a weak form of Malle's
conjecture, in which upper and lower bounds, differing only by a
factor of $O_\epsilon(X^\epsilon)$, are proved for $N(G,X)$. Versions
of Malle's conjecture are also known for certain products of groups,
and wreath product of groups (see for example \cite{CDO,Kl1,Wang1,LOWW}).

For the important case $G=S_n$, only four cases are known. The case
$n=2$ is trivial. The case $n=3$ is due to work of
Davenport--Heilbronn \cite{DH}, while the cases $n=4,5$ are both
results of Bhargava \cite{dodqf, dodpf}.  In all these cases, the
authors prove that $N(S_n,X)\sim c_nX$ for explicit constants
$c_n$. For general $n$, the best known upper and lower bounds are due
to work of Lemke Oliver and Thorne \cite{LOT}, and Bhargava, Wang, and
the first named author \cite{BSW}, respectively.  Both these results draw
from the methods of previous foundational work by Ellenberg--Venkatesh
\cite{EV}.

In a different direction, Bhargava \cite{Bhamass1} gives a
constant $c_n$ for all integers $n\geq 2$, and conjectures that
$N(S_n,X) \sim c_nX$. The constant $c_n$ is inspired by (and
consistent with) the known results for $n=3$, $4$, and $5$.  The
justification for this conjecture follows from Serre's mass formula
\cite{Serre} together with an assumption that degree-$n$ extensions of
local fields can be independently patched together to form
$S_n$-number fields.

In this paper, we give a different heuristic justification for the
constant $c_n$. In fact, we give a procedure to compute $N(S_n,X)$. At
a certain point our method requires executing a sieve which we are not
able to do in general. In particular, we need to show the independence
of a certain set of thin congruence conditions on lattice points
within a region of Euclidean space. Our inability to show this is the
reason why we do not provably compute $N(S_n,X)$. Nonetheless, in \S3
we execute our method rigorously in the case of $n=3$, recovering the
result of Davenport-Heilbronn.

The way that the results for $n=3,4,5$ have been proven is by finding
a parametrization of the space of rings of rank $n$ over $\Z$ in terms
of orbits of a reductive group acting on a lattice. Asymptotics for
the number of these orbits having bounded discriminant are then
computed using geometry-of-numbers methods. Finally, a sieve is
performed to compute the asymptotics of maximal orders. The main
difficulty in generalizing this approach to counting fields of degree
$n>5$ is the lack of a convenient parametrization for rank $n$ rings.

Our method is to instead count algebraic integers
$\alpha$ of height $\leq Y$ inside every degree-$n$ field $K$ with
bounded discriminant for varying $Y$. With one hiccup, this is fairly
straightforward to do since it essentially amounts to counting the
total number of algebraic numbers $\alpha$ of degree $n$ of height
$\leq Y$ which can be done by looking at the minimal polynlmial of
$\alpha$. On the other hand, if the lattice given by $\cO_K$ is
sufficiently regular then the number of such $\alpha$ is given by
counting points in a lattice and can therefore be well approximated by
the ratio $\frac{C(Y)}{\Disc(K)}$, for some explicit function
$C(Y)$. This then gives a family of identities parametrized by $Y$
from which it is straightforward to recover the asymptotic behaviour
of $N(S_n,X)$.

The hiccup alluded to above is that to recover the discriminant of $K$
from the minimal polynomial of $\alpha$ we need to know the index of
$\Z[\alpha]$ inside the maximal order $\cO_K$. This is given by
independent congruence conditions for each prime, and ends up giving a
thin family of congruence conditions. Proving this independence is the
only part of our argument which remains conditional.

This paper is organized as follows. In \S2 we give the general setup
and the proof that the heuristic assumption \eqref{MHA} implies that
$N(S_n,X)\sim c_n X$. In \S3 we execute the argument unconditionally
in the case of $n=3$.

\section{Heuristics}

We fix a positive integer $d$ and consider the family $\FF_d$ of
degree-$d$ $S_d$-number fields $K$. The purpose of this section is to
prove the following result.
\begin{theorem}\label{thheu}
Assume the Main Heuristic Assumption \eqref{MHA}. Then we have
\begin{equation*}
\#\bigl\{K\in\FF_d:|\Delta(K)|<X\bigr\}\sim
\Bigl(\frac{1}{2}\sum_{\substack{[K:\R]=d}}\frac{1}{\#\Aut(K)}\Bigr)
\prod_p\Bigl(1-\frac{1}{p}\Bigr)
\left(\sum_{\substack{[K:\Q_p]=d}}
\frac{|\Disc(K)|_p}{\#\Aut(K)}\right)X.
\end{equation*}
\end{theorem}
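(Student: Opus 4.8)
The plan is to count, for a varying real parameter $Y$, the set of pairs $(K,\alpha)$ where $K\in\FF_d$ with $|\Delta(K)|<X$ and $\alpha\in\cO_K$ is an algebraic integer of height at most $Y$ generating $K$, and then to extract the field count from this by ``Mellin inversion'' in the $Y$ variable. First I would introduce the bijection between such pairs $(K,\alpha)$ (up to isomorphism) and monic degree-$d$ integer polynomials $f$ with squarefree-ish structure: an irreducible monic $f\in\Z[x]$ of bounded coefficient-height determines $\alpha$ a root, $K=\Q(\alpha)$, and $\Z[\alpha]$ an order of discriminant $\disc(f)$; the missing information is the index $m=[\cO_K:\Z[\alpha]]$, which satisfies $\disc(f)=m^2\Delta(K)$. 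So counting polynomials $f$ of height $\le Y$ with $\disc(f)$ having a prescribed square part $m^2$ is exactly counting $(K,\alpha)$ with $[\cO_K:\Z[\alpha]]=m$. Summing a geometric-of-numbers count of lattice points of $\cO_K$ inside the height-$\le Y$ box against $1/\#\Aut(K)$ gives, for each $K$, a main term $\frac{C(Y)}{\sqrt{|\Delta(K)|}}\cdot\frac{1}{\#\Aut(K)}$ where $C(Y)$ is the Euclidean volume (through the Minkowski embedding) of the height-$\le Y$ region in $K\otimes\R\cong\R^d$, which depends only on the signature $[K:\R]$-data; this is where the factor $\frac12\sum_{[K:\R]=d}\frac{1}{\#\Aut(K)}$ will come from.

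Next I would set up the two ways of computing $\sum_{K}\frac{1}{\#\Aut(K)}\#\{\alpha\in\cO_K:\mathrm{ht}(\alpha)\le Y,\ \Q(\alpha)=K\}$ over $K$ with $|\Delta(K)|<X$. On one side, by the lattice-point estimate above, it is $\bigl(\sum_{|\Delta(K)|<X}\frac{1}{\#\Aut(K)}\cdot\frac{1}{\sqrt{|\Delta(K)|}}\bigr)\cdot C(Y)$ up to lower order, i.e.\ a Dirichlet-series-type sum in $X$ times a function of $Y$. On the other side, grouping the corresponding polynomials $f$ by their index $m=[\cO_K:\Z[\alpha]]$ and using the Main Heuristic Assumption \eqref{MHA} to evaluate the density of monic integer polynomials of height $\le Y$ whose discriminant has ``index exactly $m$'' as an independent product over primes $p\mid m$, I get the same quantity expressed as (volume of the polynomial box) $\times$ (a convergent Euler product over $p$ encoding local index densities). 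The local factor at $p$ is precisely the sum over degree-$d$ étale $\Q_p$-algebras — equivalently, by Krasner/Serre, over $[L:\Q_p]=d$ — of $\frac{|\Disc(L)|_p}{\#\Aut(L)}$, multiplied by the Euler factor $(1-1/p)$ coming from the ``squarefree at $p$'' or leading-coefficient normalization; this is the content of \eqref{MHA} and the reason the stated Euler product appears. Matching the two expressions for all $Y$ forces an identity of the form $\sum_{|\Delta(K)|<X}\frac{1}{\#\Aut(K)\sqrt{|\Delta(K)|}}\sim (\text{archimedean factor})\prod_p(\dots)$, and then a standard Tauberian/partial-summation argument (or directly comparing the $Y$-dependence, since $C(Y)$ and the polynomial-box volume have explicitly matching growth) converts this into the asymptotic for $\#\{K\in\FF_d:|\Delta(K)|<X\}$ with the claimed constant. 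I would also need to check that non-generating $\alpha$ (those with $\Q(\alpha)\subsetneq K$), reducible polynomials, and $\alpha$ outside a fundamental domain for translation by $\Z$ and for $\Aut$ all contribute negligibly, and that fields with $\cO_K$ too ``skew'' for the lattice-point estimate to hold (very small covolume in some direction, i.e.\ a short vector) are rare enough to ignore — these are the routine but necessary error-term lemmas.

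The main obstacle I anticipate is precisely the one flagged in the introduction: justifying that the congruence conditions defining ``$[\cO_K:\Z[\alpha]]$ is divisible by $p^k$'' are independent across primes \emph{and} that this independence survives when restricted to the thin set of lattice points (monic polynomials) inside the height-$\le Y$ box, so that the density is genuinely the product of local densities. In the rigorous $d=3$ case this is handled by a direct sieve (and is the subject of \S3), but for general $d$ this is exactly the Main Heuristic Assumption \eqref{MHA}, so in the present theorem I simply invoke it. A secondary technical point is making the geometry-of-numbers lattice-point count in $\cO_K$ uniform enough in $K$ that the error terms, after summing over all $K$ with $|\Delta(K)|<X$, are $o(X)$ — this requires controlling the number of fields whose ring of integers has a short vector under the Minkowski embedding, which can be done via a crude upper bound on $N(S_d,X)$ coming from counting the polynomials themselves (a Schmidt-type bound suffices as an a priori input).
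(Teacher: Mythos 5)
Your overall architecture is the same as the paper's: the bijection between pairs $(K,\alpha)$ and monic integer polynomials with the index $m$ tracked via $\Delta(f)=m^2\Delta(K)$, the Main Heuristic Assumption to replace the polynomial count by $\sigma(n)$ times an archimedean volume, the identification of the $p$-adic densities with the Serre-type mass $\sum_{[K_p:\Q_p]=d}|\Disc(K_p)|_p/\#\Aut(K_p)$ (in the paper this is exactly the Jacobian computation of Lemma \ref{lemjac} and \S 2.3, which evaluates $\sum_n\sigma(n)n$, i.e.\ the residue of $L(s)$ at $s=-1$; the $(1-1/p)$ is the regularizing factor in that residue, not a ``squarefree at $p$'' normalization, but this is a presentational point), the cancellation of the archimedean volume against the per-field lattice count, and finally partial summation to turn $\sum_K|\Delta(K)|^{-1/2}$ into the field count, which is where the $\tfrac12$ arises (the paper does this by summing over $\delta$-adic ranges with $\delta=1+\epsilon\to1$; your Tauberian phrasing is equivalent).

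The one step that would fail as you propose it is the control of the ``skewed'' fields, and this is not a routine error term: it is the substance of \S 2.5. You suggest bounding the number of fields whose ring of integers has a short vector by a Schmidt-type a priori count of the corresponding polynomials. But a field with $|\Delta(K)|\asymp X$ is skewed as soon as one Minkowski minimum of $\cO_K^{\tr=0}$ falls below the typical size $X^{1/(2d-2)}$ by a factor $X^{\kappa_1/(d-2)}$ with $\kappa_1=o(1)$, and the number of reduced monic polynomials of height $\le X^{1/(2d-2)}$ is $\asymp X^{(d+2)/4}$, which already exceeds $X$ for every $d\ge3$; so counting polynomials gives no nontrivial bound on the skewed fields in the relevant range. (You cannot escape by taking the skewness threshold to be a fixed power of $X$, because then the complementary ``non-skewed'' fields only satisfy the exact lattice-point asymptotic for $Y$ a fixed power above $X^{1/(2d-2)}$, outside the window $Y=X^{1/(2d-2)+o(1)}$ in which \eqref{MHA} is assumed.) Moreover it is not enough to show skewed fields are $o(X)$ in number: their contribution to $\sum_K|S_K(Y)|$ must be $o(X^{1+(d-1)\kappa})$, and skewed fields have anomalously \emph{many} small points, so a raw count of them is weighted against you. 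The paper closes this circle by bootstrapping from the heuristic itself: Theorem \ref{thheusk} applied at height $Y\asymp X^{1/(2d-2)}$ shows the total number of such points over all fields in the range is $O(X)$, while each skewed field contributes $\gg X^{\kappa_1}$ of them, so there are $\ll X^{1-\kappa_1}$ skewed fields; the elementary lattice facts (b),(c) then bound their contribution at height $X^{1/(2d-2)+\kappa}$ by $X^{1+(d-1)\kappa-\kappa_1}$, which is negligible. You should replace your Schmidt-type input with this bootstrap (it lives entirely inside the framework you already set up); with that change your argument matches the paper's proof.
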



This section is organized as follows. In \S2.1, we define heights and
establish a height preserving bijection between the set of degree-$d$
fields number $K$ such that the normal closure of $K$ has Galois group
$S_d$ over $\Q$ (degree-$d$ $S_d$-number fields) along with an element
of $\cO_K$ with a certain subset of monic integer degree-$d$
polynomials. Then in \S2.2, we make our fundamental heuristic
assumption regarding the asymptotics of the number of such monic
integral polynomials $f(x)$, such that the coefficients of $f$ are
bounded and satisfy certain congruence conditions. This asymptotic is
expressed as a product of local densities. In \S2.3, we compute these
local $p$-adic densities using a Jacobian change of variables and in
\S2.4 we compute the local volume at infinity. Finally, in \S2.5, we
combine our results to prove Theorem \ref{thheu}, recovering
Bhargava's heuristics.

\subsection{The main bijection and setup}


Fix an integer $d\geq 2$. We choose a height function
\begin{equation*}
  h:\bigcup K_\infty\to\R_{\geq 0},
\end{equation*}
where the union is over all degree-$d$ etal\'{e} algebras $K_\infty$
over $\R$, such that $h$ satisfies the following three conditions: (a)
the set of elements with $h=1$ is compact and has measure $0$, (b) $h$
scales linearly, i.e., $h(\lambda x)=\lambda h(x)$ for $\lambda\in\R$
and $x\in \cup K_\infty$, and (c) the function $h$ is nonzero away
from elements $0\in K_\infty$.

Consider a degree-$d$ $S_d$-number field $K\in\FF_d$. Then we use the
natural embedding $\iota: K\to K\otimes\R$ to define a height function
on $K$. Namely, we set $h(\alpha):=h(\iota(\alpha))$.  Let $S_K$
denote the set of elements $\alpha\in\cO_K$ that are {\it reduced},
i.e., have trace in $\{0,1,\ldots,d-1\}$. For a real number $Y>0$, we
let $S_K(Y)$ denote the elements $\alpha\in S_K$ such that
$h(\alpha)<Y$.

Let $V$ denote the space of monic polynomials $f(x)$ of degree $d$,
and let $V_0$ (resp.\ ($V_d$) denote the subspace of $V$ consisting of
elements $f$ with trace $0$ (resp.\ with trace in
$\{0,1,\ldots,d-1\}$). We let $\Delta(f)$ denote the discriminant of
$f$. Given a polynomial $f(x)\in V(\R)$ with nonzero discriminant, we
obtain a pair $(\R[x]/f(x),x)$ of a degree-$d$ etal\'{e} algebra
$K_\infty$ over $\R$, along with an element in $K_\infty$.  We define
the height of a polynomial $f\in V(\R)$ with nonzero discriminant via
$h(f):=h(\alpha)$, where $f$ corresponds to the pair
$(K_\infty,\alpha)$.

Let $V(\Z)^\gen$ denote the subset of $V(\Z)$ consisting of
polynomials $f$ such that $R_f:=\Z[x]/f(x)$ is an order in an
$S_d$-number field, and for subsets $L$ of $V(\Z)$ define
$L^\gen:=L\cap V(\Z)^\gen$. For $f\in V(\Z)^\gen$, we denote by
$\ind(f)$ the index of the order $R_f$ in the maximal order of its
fraction field. By sending an element $\alpha\in K$ to its minimal
polynomial, we obtain a bijection between the set of pairs
$(K,\alpha\in S_K)$ and the set $V_d(\Z)^\gen$. Throughout this
section, we fix a constant $\delta>1$. Keeping track of discriminants
and the index, we obtain the following equality:
\begin{equation}\label{hmain}
  \sum_{\substack{K\in\FF_d\\X<|\Delta(K)|<\delta X}}|S_K(Y)|=
  \sum_{n=1}^\infty \#\{f\in V_d(\Z)^\gen:
  \ind(f)=n,\;h(f)<Y,\;n^2X<|\Delta(f)|<\delta n^2X\}
\end{equation}

Now, by Minkowski's theorem, for any such field $K$, $S_K(Y)$ is
nonempty as long as $Y\gg X^{1/(2d-2)}$. From now on we thus restrict
to $Y=X^{1/(2d-2)+o(1)}$.  Note that for an element $\alpha\in K$ with
$h(\alpha)=H$, the discriminant of $\Z[\alpha]$ is of size at most
$O(H^{d(d-1)})$.  Therefore the index $n$ is at most
$O\Bigl(H^{d(d-1)/2}/|\Delta_K|^{1/2}\Bigr)$, and so the sum in
\eqref{hmain} goes up to $n=O(X^{d/4-1/2+o(1)})$.

\subsection{Local densities and big heuristic assumption}

Let $\sigma(n)$ denote the density in $V_d(\Zhat)$ of those $f\in
V_d(\Zhat)$ such that the index of $\Zhat[x]/f$ in the corresponding
maximal order is exactly $n$. Fix a constant $\delta>1$. We make the
following assumption.

\vspace{.1in}
\noindent \textbf{ Main Heuristic assumption:} On average over $n$, we have
\begin{equation}\label{MHA}
\begin{array}{rcl}
&&\displaystyle\#\Bigl\{f\in
V_d(\Z)^\gen:\ind(f)=n,\;h(f)<Y,\;n^2X<|\Delta(f)|<\delta n^2X\Bigr\}\\[.15in]
&&\sim
\displaystyle\sigma(n)\Vol\Bigl(\bigl\{f\in
V_d(\R):\;h(f)<Y,\;n^2X<|\Delta(f)|<\delta n^2X\bigr\}\Bigr),
\end{array}
\end{equation}
for $Y=X^{1/(2d-2)+o(1)}$.

\vspace{.1in}

Next, we apply the transformation $\theta$ on $V(\R)$, which acts on
$f(x)\in V(\R)$ by dividing all the roots of $f(x)$ by
$Y$. Equivalently, for every $k$, the map $\theta$ scales the
$x^k$-coefficient of $f(x)$ by $1/Y^k$. It is easy to see that we have
$h(\theta\cdot f)=h(f)/Y$ and $\Delta(\theta\cdot
f)=Y^{-d(d-1)}\Delta(f)$.  We consider $V_d(\R)$ as a subset of
$V(\R)$. Applying $\theta$ will map $V_d$ into a union of hyperplanes
in $V(\R)$, namely those having having traces in
$\{0,1/Y,\ldots,d/Y\}$. We may thus write
\begin{align*}
&\Vol\Bigl(\bigl\{f\in V_d(\R):\;h(f)<Y,\;n^2X<|\Delta(f)|<\delta n^2X\bigr\}\Bigr)\\
& \sim dY^{\binom{d+1}{2}-1}\cdot\Vol\Bigl(\bigl\{f\in V_0(\R):\;h(f)<1,\;n^2XY^{-d(d-1)}<|\Delta(f)|<\delta n^2XY^{-d(d-1)}\bigr\}\Bigr).
\end{align*}

We make the following definitions. For complex numbers $s$ where the
sum converges, and for real numbers $t>0$, we define
\begin{equation*}
\begin{array}{rcl}
L(s)&=&\displaystyle\sum_{n\geq 1} \frac{\sigma(n)}{n^s};\\[.1in]
g(t)&=&\displaystyle d\cdot \Vol\Bigl(\bigl\{f\in V_0(\R):\;h(f)<1,\;t^2<|\Delta(f)|<\delta t^2\bigr\}\Bigr).
\end{array}
\end{equation*}
In the next subsection, we will see that $L(s)$ converges absolutely
to the right of $\Re(s)=-1$, and has an analytic continuation to the
left of $\Re(s)=-2$ with a simple pole at $s=-1$. It is easy to see
that $g(t)$ tends to 0 as $t\to 0$, and has compact support.

Next, we set $R=Y^{\binom{d}{2}}/X^{1/2}$.  Assuming the Main
Heuristic Assumption \eqref{MHA}, we see from \eqref{hmain} and the
above discussion that we have
\begin{equation}\label{eqAMHsecond}
\begin{array}{rcl}
\displaystyle\sum_{\substack{K\in\FF_d\\X<|\Delta(K)|<\delta X}}|S_K(Y)|&=&
\displaystyle Y^{\binom{d+1}{2}-1}\sum_{n\geq 1}\sigma(n)g\Bigl(\frac{n}{R}\Bigr)\\[.2in]
&=&\displaystyle Y^{\binom{d+1}{2}-1}\int_{\Re(s)=2} L(s)\tilde{g}(s)R^s\\[.2in]
&\sim&\displaystyle Y^{d-1}X^{1/2}\tilde{g}(-1)\res_{s=-1}L(s).
\end{array}
\end{equation}
In the next two subsections, we compute the residue of $L(s)$ at
$s=-1$, and the value of $\tilde{g}(-1)$, respectively.


\subsection{Computing the residue of $L(s)$}

Let $K_p$ (resp.\ $K_\infty$) be a degree $d$ \'etale extension of
$\Q_p$ (resp.\ $\R$). For $v=p$ or $\infty$, we have a map
\begin{equation}\label{eqphi}
\begin{array}{rcl}
\phi:K_v &\ra& V(K_v)\\[.1in]
\alpha&\mapsto&\Nm(x-\alpha),
\end{array}
\end{equation}
which is $\Aut(K_v)\ra 1$. Moreover, for $p$ a prime number, the image
of $\phi(\cO_{K_p})$ is contained in $V(\Z_p)$.  We fix the
Haar-measure $\nu$ on $V(\Z_p)\cong\Z_p^d$ (resp.\ $V(\R)\cong\R^d$)
normalized so that $\nu(V(\Z_p)=1$ (resp.\ $\nu(V(\R)/V(\Z)=1$).  We
also fix the Haar-measure $\mu$ on $K_v$ normalized so that
$\mu(\cO_{K_p})=1$ when $v=p$ is prime, and normalized to be standard
Euclidean measure, after identifying $\C\equiv\R^2$ via the basis
$\{1,i\}$, when $v=\infty$.
The following lemma relates the measures $\phi^*\nu$ and $\mu$.
\begin{lemma}\label{lemjac}
With the measures $\nu$ and $\mu$ normalized as above, we have
\begin{equation*}
\begin{array}{rcl}
  \phi^*\nu&=&|\Disc(K_p)^{1/2}|_p|\Disc(x)|^{1/2}_p \mu(x)
      \;\;\;\;\mbox{ when }v=p;\\[.1in]
   \phi^*\nu&=&|\Disc(x)|^{1/2} \mu(x)
   \;\;\;\;\;\;\;\;\;\;\;\;\;\;\;\;\;\;\;\;\;\;\;\;\;\;
   \mbox{ when }v=\infty.
\end{array}
\end{equation*}
\end{lemma}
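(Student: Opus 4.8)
The plan is to compute the pullback $\phi^*\nu$ by choosing convenient local coordinates on $K_v$ and on $V(K_v)$ and computing the Jacobian of $\phi$ in those coordinates; the key identity that makes everything work is that the discriminant of the characteristic polynomial $\Nm(x-\alpha)$ equals, up to a square of the determinant of a Vandermonde-type matrix, the field discriminant times the square of the ``index form'' of $\alpha$.

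\medskip

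\noindent\emph{Step 1: Reduce to a Jacobian computation.} Pick a $\Z_p$-basis (resp.\ $\R$-basis) $e_1=1,e_2,\ldots,e_d$ of $\cO_{K_p}$ (resp.\ of $K_\infty$). Writing $\alpha=\sum_i a_i e_i$ gives coordinates $(a_1,\ldots,a_d)$ on $K_v$ in which $\mu=|da_1\cdots da_d|$ (this is exactly what the normalization $\mu(\cO_{K_p})=1$, resp.\ Euclidean measure, is engineered to give). On the target, write $f(x)=x^d+c_1x^{d-1}+\cdots+c_d$, so $\nu=|dc_1\cdots dc_d|$ by the normalization $\nu(V(\Z_p))=1$ (resp.\ $\nu(V(\R)/V(\Z))=1$). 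Then $\phi^*\nu=|J|\cdot\mu$ where $J=\det(\partial c_j/\partial a_i)$, and the whole lemma becomes the claim that $|J|_p=|\Disc(K_p)|_p^{1/2}|\Disc(f)|_p^{1/2}$ (resp.\ $|J|=|\Disc(f)|^{1/2}$ over $\R$).

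\medskip

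\noindent\emph{Step 2: Factor the Jacobian through the roots.} The cleanest route is to factor the map $\alpha\mapsto f$ as $\alpha\mapsto(\alpha^{(1)},\ldots,\alpha^{(d)})\mapsto(c_1,\ldots,c_d)$, where $\alpha^{(j)}$ are the conjugates of $\alpha$ (the images under the $d$ embeddings $K_v\otimes\bar{\Q}_v\to\bar{\Q}_v$, resp.\ the real/complex embeddings), and the second map sends a tuple of roots to the elementary symmetric functions. The Jacobian of the second map is the classical Vandermonde: $\det(\partial c_j/\partial\alpha^{(i)})=\pm\prod_{i<k}(\alpha^{(i)}-\alpha^{(k)})$, whose square is $\Disc(f)$. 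The Jacobian of the first map, $\det(\partial\alpha^{(j)}/\partial a_i)=\det(e_i^{(j)})$, is precisely the matrix whose square is $\disc(e_1,\ldots,e_d)=\Disc(K_p)$ (resp.\ $\Disc(K_\infty)$, which over $\R$ we have normalized away). Multiplying, $J^2=\Disc(K_v)\cdot\Disc(f)$, which after taking $p$-adic (resp.\ archimedean) absolute values and square roots is exactly the asserted formula; over $\R$ the $\Disc(K_\infty)$ factor disappears because the measure $\mu$ on $K_\infty$ was fixed to be standard Euclidean (via the basis $\{1,i\}$ on each $\C$-factor) rather than the one coming from the trace form, and one checks these differ by exactly $|\Disc(K_\infty)|^{1/2}$.

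\medskip

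\noindent\emph{The main obstacle} is bookkeeping the normalizations carefully, especially at $v=\infty$ and at the complex places: one must verify that ``standard Euclidean measure via $\{1,i\}$'' on a product of copies of $\R$ and $\C$ is related to the ``roots'' coordinates by exactly the Vandermonde factor, with the right powers of $2$ absorbed correctly when a complex embedding and its conjugate are grouped together. A subtlety worth isolating is that $\phi$ is generically $\Aut(K_v)$-to-$1$ but this does not affect the pointwise Jacobian identity (it only matters when one later integrates); so the lemma as stated is a clean local computation and the degree of $\phi$ is a red herring for this step. Everything else --- the Vandermonde determinant, the relation $\disc(\text{basis})=\Disc(K)$, and the chain rule --- is standard and can be cited or done in a line.
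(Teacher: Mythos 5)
Your approach is essentially the one the paper intends: the paper's entire proof is the remark that the lemma is direct for $v=\infty$ and for the split algebra $K_p=\Q_p^d$, with the general case reduced to the split one by tensoring with a Galois field $M$ containing $K_p$; your chain-rule factorization through the conjugate coordinates $(\alpha^{(1)},\ldots,\alpha^{(d)})$ is exactly that computation carried out explicitly over a splitting field, and the $p$-adic half of your argument is correct as written: $J^2=\det(e_i^{(j)})^2\cdot\prod_{i<k}(\alpha^{(i)}-\alpha^{(k)})^2=\Disc(K_p)\Disc(f)$, giving $|J|_p=|\Disc(K_p)|_p^{1/2}|\Disc(f)|_p^{1/2}$ since the $e_i$ form a $\Z_p$-basis of $\cO_{K_p}$. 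Your observation that the $\Aut(K_v)$-to-$1$ nature of $\phi$ is irrelevant to the pointwise density identity is also correct.

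The gap is at $v=\infty$, precisely at the point you yourself flagged as ``the main obstacle'' and then waved through. Your Step 2 identity, applied with the Euclidean basis ($\{1\}$ on each $\R$-factor, $\{1,i\}$ on each $\C$-factor), gives $\phi^*\nu=|\disc(e_1,\ldots,e_d)|^{1/2}\,|\Disc(f)|^{1/2}\mu$, and $|\disc(1,i)|^{1/2}=|{\det}\bigl(\begin{smallmatrix}2&0\\0&-2\end{smallmatrix}\bigr)|^{1/2}=2$ per complex place; so with honest Lebesgue measure the factor does not ``disappear'' --- it becomes $2^{r_2}$. Concretely, for $K_\infty=\C$ one has $\phi(u+iv)=x^2-2ux+(u^2+v^2)$, so $\phi^*(dc_1\,dc_2)=4|v|\,du\,dv=2\,|\Disc(f)|^{1/2}\,du\,dv$. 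Your stated resolution --- that the Euclidean and trace-form measures differ by exactly $|\Disc(K_\infty)|^{1/2}$ and hence the discriminant factor cancels --- is backwards: the cancellation you describe would occur only if $\mu$ were the trace-form-normalized measure, whereas the paper fixes $\mu$ to be Lebesgue via $\{1,i\}$. So as written your proof establishes the archimedean identity only for totally real $K_\infty$, and in general proves $\phi^*\nu=2^{r_2}|\Disc(x)|^{1/2}\mu$. (The paper's statement and one-line proof gloss over the same normalization, and the discrepancy is ultimately harmless because the analogous factor is dropped again in the covolume convention of Fact (d) in \S2.5, so it cancels in \eqref{eqhbfin}; but a correct proof of the lemma must either track this $2^{r_2}$ or adjust the archimedean normalization, not assert that it vanishes.)
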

\noindent The above lemma follows directly for $v=\infty$ and
$K_p=\Q_p^d$. The general case may be reduced to this one by tensoring
with a Galois field $M$ containing $K_p$.

\vspace{.1in}

Therefore, we obtain
\begin{equation*}
\begin{array}{rcl}
  \displaystyle\int_{V(\Z_p)} |\ind(f)|_p^s \nu(f) &=&
  \displaystyle\sum_{[K_p:\Q_p]=d}\frac{|\Disc(K_p)|_p^{1/2}}{|\Aut(K_p)|}
  \int_{\cO_{K_p}}
  |\ind(\alpha)|_p^s|\Disc(\alpha)|^{1/2}_p\mu(\alpha)
\\[.2in]&=&\displaystyle\sum_{[K_p:\Q_p]=d}
\frac{|\Disc(K_p)|_p}{|\Aut(K_p)|}\int_{\cO_{K_p}}
|\ind(\alpha)|_p^{s+1}\mu(\alpha).
\end{array}
\end{equation*}
Taking $s=-1$ now yields
\begin{equation*}
\int_{V(\Z_p)} |\ind(f)|_p^{-1} \nu(f) =\sum_{[K_p:\Q_p]=d}
\frac{|\Disc(K_p)|_p}{|\Aut(K_p)|}=1+1/p+O(1/p^2).
\end{equation*}
It thus follows that $L(s)$ has its rightmost pole at $s=-1$ and that this
pole is simple. Moreover, we clearly have
\begin{equation}\label{eqLresidue}
\res_{s=-1}L(s)=
\prod_p\Bigl(1-\frac{1}{p}\Bigr)
\left(\sum_{[K_p:\Q_p]=d}\frac{|\Disc(K_p)|_p}{|\Aut(K_p)|}\right).
\end{equation}
  
\subsection{Computing $\tilde{g}(-1)$}
We start by writing
\begin{equation*}
\begin{array}{rcl}
\tilde{g}(-1) &=&
\displaystyle d\int_0^{\infty}\Vol\Bigl(\bigl\{f\in V_0(\R):\;h(f)<1,\;
t^2<|\Delta(f)|<\delta t^2\bigr\}\Bigr) \frac{dt}{t^2}
\\[.2in]&=&\displaystyle
d\int_0^{\infty}\Vol\Bigl(\bigl\{f\in V_0(\R):\;h(f)<1,\;s<
|\Delta(f)|<\delta s\bigr\}\Bigr) \frac{ds}{2s^{3/2}}\\[.2in]
&=&
\displaystyle d(\sqrt{\delta}-1)
\int_{\substack{f\in V_0(\R)\\h(f)<1}} |\Delta(f)|^{-\frac{1}{2}}\mu(f).
\end{array}
\end{equation*}

Let $K_\infty$ be a fixed degree-$d$ etal\'{e} algebra over $\R$. In
Lemma \ref{lemjac}, we computed a Jacobian change of variables which
applies to the map $\phi:K_\infty\to V(\R)$. This yields the equality
$df=|\Disc(x)|^{1/2}dx$, where $df$ and $dx$ denote the previously
normalized Haar-measures on $V(\R)$ and $K_\infty$, respectively.

The additive group $\R$ acts on $V(\R)$ via linear change of
variables: an element $\lambda$ of $\R$ sends $f(x)$ to
$f(x+\lambda)$. This action clearly preserves the discriminant.
Furthermore, $\R$ acts on $K_\infty$ by addition. It is easy to see
that the map $\phi$ of \eqref{eqphi} respects the action of $\R$ on
$K_\infty$ and $V(\R)$, which is to say that
$\phi(\alpha+\lambda)=\lambda\cdot \phi(\alpha)$.
This action of $\R$ allows us to write
\begin{equation*}
\begin{array}{rcl}
V(\R)&=&\R\times V_0(\R);\\[.1in]
K_\infty&=&\R\times K_\infty^{(\tr=0)}.
\end{array}
\end{equation*}
The map $\phi$ sends an element $(\lambda,f)\in \R\times V_0(\R)$ to
the pair $(\lambda,\phi(f))$. The Jacobian change of variables of the
maps $V(\R)\to\R\times V_0(\R)$ and $K_\infty\to\R\times
K_\infty^{(\tr=0)}$ are easily computed to be $d$ and $1$,
respectively. Denoting the Haar-measures on $V_0(\R)$ by $d_0(f)$ and
on $K_\infty^{(\tr=0)}$ by $d_0\alpha$, we obtain from Lemma
\ref{lemjac} that
\begin{equation*}
d_0(f)=\frac{1}{d}|\Disc(\alpha)|^{1/2}d_0\alpha.
\end{equation*}
Therefore, we have
\begin{equation}\label{eqgtil}
  \tilde{g}(-1)=(\sqrt{\delta}-1)\sum_{K_\infty}\frac{1}{\#\Aut(K_\infty)}
\int_{\substack{\alpha\in K_\infty^{\tr=0}\\h(\alpha)<1}}d_0\alpha.
\end{equation}

Combining \eqref{eqAMHsecond}, \eqref{eqLresidue}, and \eqref{eqgtil},
we obtain the following result.
\begin{theorem}\label{thheusk}
Let $X>0$ be a real number, eventually going to infinity. Fix a
constant $\delta>1$. Let $Y>0$ be a real number such that
$Y=X^{1/(2d-2)+o(1)}$. Conditional on the Main Heuristic Assumption
\eqref{MHA}, we have
\begin{equation*}
\displaystyle\sum_{\substack{K\in\FF_d\\X<|\Delta(K)|<\delta X}}|S_K(Y)|
\sim
(\sqrt{\delta}-1)X^{1/2}Y^{d-1}
\sum_{K_\infty}\frac{1}{\#\Aut(K_\infty)}
\int_{\substack{\alpha\in K_\infty^{\tr=0}\\h(\alpha)<1}}d_0\alpha
\prod_p\Bigl(1-\frac{1}{p}\Bigr)
\left(\sum_{[K_p:\Q_p]=d}\frac{|\Disc(K_p)|_p}{|\Aut(K_p)|}\right).
\end{equation*}
\end{theorem}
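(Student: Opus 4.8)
The plan is to obtain Theorem~\ref{thheusk} by assembling three facts already in place from \S2.2--\S2.4: the asymptotic identity \eqref{eqAMHsecond}, the residue formula \eqref{eqLresidue}, and the formula \eqref{eqgtil} for $\tilde g(-1)$. The last line of \eqref{eqAMHsecond} already reads
\[
\sum_{\substack{K\in\FF_d\\ X<|\Delta(K)|<\delta X}}|S_K(Y)|\ \sim\ Y^{d-1}X^{1/2}\,\tilde g(-1)\,\res_{s=-1}L(s),
\]
so to prove the theorem I would just substitute \eqref{eqgtil} in place of $\tilde g(-1)$ and \eqref{eqLresidue} in place of $\res_{s=-1}L(s)$. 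This reproduces verbatim the asserted expression: the factor $\sqrt\delta-1$ comes from \eqref{eqgtil}, the $X^{1/2}Y^{d-1}$ from the display above, the sum over $K_\infty$ together with the integral $\int_{h(\alpha)<1}d_0\alpha$ over the trace-zero subspace from \eqref{eqgtil}, and the Euler product $\prod_p(1-1/p)(\sum_{[K_p:\Q_p]=d}|\Disc(K_p)|_p/|\Aut(K_p)|)$ from \eqref{eqLresidue}. The one bookkeeping point, namely that the power of $Y$ works out, is internal to \eqref{eqAMHsecond}: it uses $\binom{d+1}{2}-\binom{d}{2}=d$ together with $R=Y^{\binom{d}{2}}/X^{1/2}$, so that $Y^{\binom{d+1}{2}-1}\cdot R^{-1}=Y^{d-1}X^{1/2}$; I would simply point to that derivation.

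Since the only step with genuine analytic substance is \eqref{eqAMHsecond} itself, for a self-contained account I would reprove its key identity. Because $g$ is supported on a bounded subinterval of $(0,\infty)$, the sum $\sum_{n\ge1}\sigma(n)g(n/R)$ is a finite sum; writing $\tilde g(s)=\int_0^\infty g(t)t^{s-1}\,dt$, which is holomorphic slightly to the left of $\Re(s)=-1$ (the bound $\Vol\{f\in V_0(\R):h(f)<1,\ |\Delta(f)|<u\}\ll u^{1/2+\eta}$ forces $g(t)\ll t^{1+2\eta}$ as $t\to0$), Mellin inversion and $L(s)=\sum_{n\ge1}\sigma(n)n^{-s}$ give
\[
\sum_{n\ge1}\sigma(n)\,g(n/R)\ =\ \frac{1}{2\pi i}\int_{(c)}L(s)\,\tilde g(s)\,R^{s}\,ds
\]
for any $c>-1$. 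I would then move the contour to $\Re(s)=-1-\epsilon$; by the properties of $L(s)$ recorded in \S2.3 the only singularity in between is the simple pole of $L$ at $s=-1$, which contributes $R^{-1}\tilde g(-1)\res_{s=-1}L(s)$, while the shifted integral is $O_\epsilon(R^{-1-\epsilon})$ and so of lower order, since $R=X^{d/4-1/2+o(1)}\to\infty$. Multiplying by $Y^{\binom{d+1}{2}-1}$ recovers the last line of \eqref{eqAMHsecond}, after which the substitution above finishes the proof.

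The main technical obstacle --- the Main Heuristic Assumption and Lemma~\ref{lemjac} being taken as given --- is making the contour shift legitimate, i.e.\ showing that the horizontal segments at $|\Im s|=T$ tend to $0$ as $T\to\infty$; this needs honest decay of $L(s)\tilde g(s)$ along vertical lines. For $\tilde g$ one can use $\tilde g(\sigma+it)=O(1/|t|)$, valid because $g$ is of bounded variation (it is a difference of two monotone volume functions of $|\Delta(f)|$, precomposed with $t\mapsto t^2$); for $L$ one uses the standard polynomial-growth bounds that accompany its Euler-product structure and continuation from \S2.3. If the product of these estimates is not quite absolutely integrable on the vertical line, the clean fix is to trap $g$ between smooth, compactly supported functions $g^-\le g\le g^+$ with $\int(g^+-g^-)$ as small as we like --- so that $\tilde g^\pm$ decays faster than any power of $|t|$ and the shift is unambiguous --- run the argument for $g^+$ and $g^-$ separately, and then let the approximation tighten; this is harmless since $\sigma(n)\ge0$. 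With \eqref{eqAMHsecond} thus established, Theorem~\ref{thheusk} follows by the substitution described at the outset.
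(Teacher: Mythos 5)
Your proposal matches the paper's own proof: Theorem~\ref{thheusk} is obtained there precisely by substituting \eqref{eqgtil} and \eqref{eqLresidue} into the final line of \eqref{eqAMHsecond}, exactly as you do, and your exponent bookkeeping ($\binom{d+1}{2}-1-\binom{d}{2}=d-1$ with $R=Y^{\binom{d}{2}}/X^{1/2}$) is correct. Your extra Mellin-inversion/contour-shift justification of \eqref{eqAMHsecond}, including the smoothing of $g$ to legitimize the shift, only fills in details the paper leaves implicit, so the argument is correct and essentially the same as the paper's.
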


\subsection{Cutting off the cusp}

The purpose of this subsection is to deduce Theorem \ref{thheu} from
Theorem \ref{thheusk}. For this, we will need to do two things. First,
we must bound the number of fields $K$ for which $\cO_K$ is ``skewed'',
and as a result the size of $S_K(Y)$ is anomalous.  Second, for the
remainder of fields $K$, we must precisely estimate the average size
of $S_K(Y)$ for the relevant ranges of $Y$. To accomplish these two
goals, we analyze how $S_K(Y)$ behaves using results purely from
lattice theory.

We pick a small constant $\kappa>0$, eventually tending to $0$, a
constant $0<\kappa_1<\kappa$. For $\delta>1$, let $B^{(\delta)}_X$
denote the set of fields $K$ with $X<|\Disc(K)|<\delta X$, such that the
largest vector in a Minkowski basis for $\cO_K$ has length bounded by
$X^{1/(2d-2)+\kappa_1}$.  We let $C^{(\delta)}_X$ denote the set of
fields $K$ with $X<|\Disc(K)|<\delta X$, and such that $K\not\in
B^{(\delta)}_X$.  Then, we have the following facts, which follow
immediately from the theory of Minkowski bases and Minkowski's
theorem:

\begin{itemize}
\item[{\rm (a)}] If $|\Disc(K)|<2X$ then, for some absolute constant $c>0$, we
  have $S_K(cX^{1/(2d-2)})\geq 1$.
  
\item[{\rm (b)}] For $K$ as above, if $Y>cX^{1/(2d-2)}$ and $Z>1$,
  then we have $S_K(YZ)\ll S_K(Y) Z^{d-1}$.
  
\item[{\rm (c)}] For $K\in C_X^{(\delta)}$, we have
  $S_K(cX^{1/(2d-2)+\kappa_1})\ll
  X^{(d-2)\kappa_1}S_K(cX^{1/(2d-2)})$. It thus follows
  that $$S_K(cX^{1/(2d-2)+\kappa})\ll
  X^{(d-1)\kappa-\kappa_1}S_K(cX^{1/(2d-2)}).$$

\item[{\rm (d)}] For $K\in B_X^{(\delta)}$, we
  have
\begin{equation*}
  S_K(X^{1/(2d-2)+\kappa})=
  \frac{X^{\frac{1}{2}+(d-1)\kappa}}{|\Disc(K)|^{1/2}}
  \int_{\substack{\alpha\in K_\infty^{\tr=0}\\h(\alpha)<1}}d_0\alpha.
\end{equation*}
\end{itemize}

\subsubsection*{Bounding the sum over fields in $C_X^{(\delta)}$}
We have the following lemma regarding fields in $C_X^{(\delta)}$.
\begin{lemma}\label{lemhskew}
Let notation be as above. Then, conditional on the Main Heuristic
Assumption, we have
\begin{equation*}
|C_X^{(\delta)}|\ll X^{1-\kappa_1};\quad\quad\quad
\sum_{K\in C_X^{(\delta)}}|S_K(X^{1/(2d-2)+\kappa})|\ll X^{1+(d-1)\kappa-\kappa_1}.  
\end{equation*}
\end{lemma}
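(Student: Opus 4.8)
The plan is to prove the two bounds in Lemma \ref{lemhskew} in sequence, with the second being essentially a consequence of the first together with fact (c). First I would recall that fact (c) tells us that for $K \in C_X^{(\delta)}$ the count $S_K(cX^{1/(2d-2)+\kappa})$ is controlled by $X^{(d-1)\kappa - \kappa_1} S_K(cX^{1/(2d-2)})$; and by fact (a), combined with the Minkowski-type upper bound that $S_K(cX^{1/(2d-2)}) \ll 1$ when the lattice $\cO_K$ has shortest vector of size $\asymp X^{1/(2d-2)}$ (this is immediate since in such a lattice only $O(1)$ lattice points lie in any ball of radius $O(X^{1/(2d-2)})$), we get $S_K(cX^{1/(2d-2)+\kappa}) \ll X^{(d-1)\kappa - \kappa_1}$ \emph{uniformly} over $K \in C_X^{(\delta)}$. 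So the second estimate follows from the first: $\sum_{K \in C_X^{(\delta)}} |S_K(X^{1/(2d-2)+\kappa})| \ll |C_X^{(\delta)}| \cdot X^{(d-1)\kappa - \kappa_1} \ll X^{1-\kappa_1} \cdot X^{(d-1)\kappa - \kappa_1}$. Wait — that gives $X^{1 + (d-1)\kappa - 2\kappa_1}$, which is stronger than claimed, so the claimed bound is safe; I would double-check the bookkeeping here but the direction is clearly favorable.

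The real content is thus the bound $|C_X^{(\delta)}| \ll X^{1-\kappa_1}$. My approach is to use the main identity \eqref{hmain} (or rather \eqref{eqAMHsecond}) applied at a cleverly chosen height, rather than trying to count skewed fields directly. The idea: each $K \in C_X^{(\delta)}$ has a Minkowski basis whose longest vector exceeds $X^{1/(2d-2)+\kappa_1}$, which forces $S_K(Y)$ to be \emph{small} for $Y$ in a window just above the Minkowski minimum — by fact (c), taking $Y = cX^{1/(2d-2)+\kappa}$ we have $S_K(Y) \ll X^{(d-1)\kappa - \kappa_1} S_K(cX^{1/(2d-2)})$, which is $o(Y^{d-1}X^{1/2}/|\Delta(K)|^{1/2})$ since the "expected" count for a round lattice would be $\asymp Y^{d-1}/|\Delta(K)|^{1/2} \asymp X^{(d-1)\kappa}$. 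Meanwhile, by Minkowski, $S_K(Y) \geq 1$ for \emph{every} field with $|\Delta(K)| < \delta X$. So each skewed field still contributes at least $1$ to the left side of \eqref{eqAMHsecond}, but the heuristic (Theorem \ref{thheusk}) pins down the total left side to be $\sim (\text{const}) X^{1/2} Y^{d-1} \asymp X^{1/2 + (d-1)\kappa}$. That alone only bounds $|C_X^{(\delta)}|$ crudely; to get the $X^{1-\kappa_1}$ saving I would instead compare the contribution of $C_X^{(\delta)}$-fields at two different heights $Y_1 = cX^{1/(2d-2)}$ and $Y_2 = cX^{1/(2d-2)+\kappa}$. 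At $Y_1$ each skewed field contributes $\geq 1$; the total left side of \eqref{eqAMHsecond} at $Y_1$ is $\asymp X^{1/2} Y_1^{d-1} = X^{1/2 + 1/2} = X$ — no, wait, $Y_1^{d-1} = X^{(d-1)/(2d-2)} = X^{1/2}$, so the total is $\asymp X$. Hence $|C_X^{(\delta)}| \leq \sum_{K: |\Delta(K)|<\delta X} S_K(Y_1) \asymp X$; this only gives $|C_X^{(\delta)}| \ll X$, not the needed saving.

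To extract the $\kappa_1$ saving I would argue as follows. Apply \eqref{eqAMHsecond} (and the intermediate step \eqref{hmain}) with height $Y_2 = cX^{1/(2d-2)+\kappa}$. Split the left-hand sum into the contribution of $B_X^{(\delta)}$-fields and $C_X^{(\delta)}$-fields. For the $B$-fields, fact (d) gives an exact formula for $S_K(Y_2)$, and summing over $B_X^{(\delta)}$ — together with the (separately provable, or definitional) fact that $B_X^{(\delta)}$ contains almost all fields, so that $\sum_{B} |\Delta(K)|^{-1/2} \sim \sum_{\text{all } K} |\Delta(K)|^{-1/2}$ up to lower order — reproduces the \emph{entire} main term of Theorem \ref{thheusk}. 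Therefore the $C$-field contribution must be of \emph{lower order}: $\sum_{K \in C_X^{(\delta)}} S_K(Y_2) = o(X^{1/2} Y_2^{d-1}) = o(X^{1/2 + (d-1)\kappa})$. But for $K \in C_X^{(\delta)}$ we showed (from facts (a), (c) and the round-lattice bound) that $S_K(Y_2) \gg$ — hmm, I need a \emph{lower} bound on $S_K(Y_2)$ for skewed fields to turn this into an upper bound on $|C_X^{(\delta)}|$. This is where the argument is delicate: a skewed field has \emph{few} points, so a lower bound on $S_K(Y_2)$ is just $\geq 1$ (from Minkowski at the smaller scale $Y_1 < Y_2$, fact (a)), giving again only $|C_X^{(\delta)}| = o(X^{1/2 + (d-1)\kappa})$, which after letting $\kappa \to 0$ and being careful with the $o$'s gives $|C_X^{(\delta)}| \ll X^{1/2 + o(1)}$ — actually \emph{stronger} than $X^{1-\kappa_1}$! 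So perhaps the cleanest route is: run \eqref{eqAMHsecond} at $Y_2$, use fact (d) to show the $B$-part gives the full main term of Theorem \ref{thheusk}, conclude the $C$-part is $o(X^{1/2+(d-1)\kappa})$, and then since each $K\in C_X^{(\delta)}$ contributes $\geq 1$ (Minkowski), deduce $|C_X^{(\delta)}| = o(X^{1/2 + (d-1)\kappa}) \ll X^{1-\kappa_1}$ for $\kappa$ small enough. The main obstacle, and the step I would scrutinize most carefully, is justifying that the $B_X^{(\delta)}$-fields alone recover the \emph{full} main term of Theorem \ref{thheusk} — this requires knowing that the sum $\sum_{K \notin B_X^{(\delta)}, |\Delta(K)| < \delta X} |\Delta(K)|^{-1/2}$ is negligible, which is almost circular with the bound on $|C_X^{(\delta)}|$ we are trying to prove; breaking this circularity (likely by a bootstrap/iteration on $\kappa_1$, or by invoking the tail bound in fact (c) inside \eqref{hmain} before passing to volumes) is the real crux. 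I would set it up as an iteration: a weak a priori bound $|C_X^{(\delta)}| \ll X$ feeds through \eqref{hmain} and fact (c) to show the $C$-contribution to the count at height $Y_2$ is $\ll X \cdot X^{(d-1)\kappa - \kappa_1} \cdot 1$, hence the $B$-part equals the total minus $O(X^{1 + (d-1)\kappa - \kappa_1})$, which is the main term $\asymp X^{1/2+(d-1)\kappa}$ up to admissible error provided $\kappa_1$ exceeds $1/2$; since $\kappa_1 < \kappa$ can be taken close to its allowed range we recover exactly the stated $|C_X^{(\delta)}| \ll X^{1-\kappa_1}$ after balancing.
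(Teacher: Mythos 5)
There is a genuine gap, and it stems from having the geometry of skewed lattices backwards. If $K\in C_X^{(\delta)}$, its longest Minkowski basis vector exceeds $X^{1/(2d-2)+\kappa_1}$, but since the product of the Minkowski minima of $\cO_K^{\tr=0}$ is $\asymp |\Disc(K)|^{1/2}\asymp X^{1/2}$, the remaining basis vectors must be correspondingly \emph{short}; hence a skewed field has \emph{many} reduced elements of small height, namely $S_K(X^{1/(2d-2)})\gg X^{\kappa_1}$. Your claimed uniform bound $S_K(cX^{1/(2d-2)})\ll 1$ for $K\in C_X^{(\delta)}$ is therefore false (the shortest vector of such a lattice is not of size $\asymp X^{1/(2d-2)}$, it is smaller), and this false bound is what both makes your version of the second estimate come out ``stronger than claimed'' and props up your bootstrap. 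The correct proof of the first estimate is a one-liner built on precisely the lower bound you are missing: apply Theorem \ref{thheusk} at $Y=X^{1/(2d-2)}$ (which is admissible, being $X^{1/(2d-2)+o(1)}$) to get $\sum_{X<|\Delta(K)|<\delta X}|S_K(Y)|\ll X$, and since each skewed field contributes $\gg X^{\kappa_1}$ to this sum, $|C_X^{(\delta)}|\ll X^{1-\kappa_1}$. No comparison of two heights, no splitting off of $B_X^{(\delta)}$, and no iteration is needed.

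Your alternative route cannot be repaired as stated: the step where the $B$-fields alone are said to recover the full main term of Theorem \ref{thheusk} is, as you yourself note, circular with the bound being proved, and your proposed bootstrap does not close --- it requires ``$\kappa_1$ exceeds $1/2$'' while the setup forces $\kappa_1<\kappa$ with $\kappa$ small. The intermediate conclusion $|C_X^{(\delta)}|=o(X^{1/2+(d-1)\kappa})$, far stronger than anything the heuristic can give, should have been a warning sign. Finally, the second estimate of the lemma is not obtained from the first via a per-field bound at all: it follows from Fact (c), which gives $S_K(X^{1/(2d-2)+\kappa})\ll X^{(d-1)\kappa-\kappa_1}S_K(cX^{1/(2d-2)})$ for $K\in C_X^{(\delta)}$, together with $\sum_{K\in C_X^{(\delta)}}S_K(cX^{1/(2d-2)})\ll\sum_{X<|\Delta(K)|<\delta X}S_K(cX^{1/(2d-2)})\ll X$ (again Theorem \ref{thheusk}), which yields exactly $X^{1+(d-1)\kappa-\kappa_1}$ --- consistent with the fact that individual skewed fields may carry many points, so only the averaged bound is available.
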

\begin{proof}
By the theory of Minkowski bases, for $K\in C_X^{(\delta)}$ we have
$S_K(X^{1/(2d-2)})\gg X^{\kappa_1}$. Thus, the first assertion of the
lemma follows from Theorem \ref{thheusk} by setting $Y=X^{1/(2d-2)}$.
For the second assertion, note that from Fact (c) above, we have
\begin{equation*}
  \sum_{K\in C_X^{(\delta)}}|S_K(X^{1/(2d-2)+\kappa})| \ll
  X^{(d-1)\kappa-\kappa_1}\sum_{K\in C_X^{(\delta)}}|S_K(cX^{1/(2d-2)})|\ll
X^{1+(d-1)\kappa-\kappa_1},
\end{equation*}
as necessary.
\end{proof}

\subsubsection*{Estimating the size of $B_X^{(\delta)}$}
We fix a signature $\sigma$ at infinite corresponding to the algebra
$K_\infty$ over $\R$. Given a set $F$ of degree-$d$ fields $K$, we let
$F^{(\sigma)}$ denote the subset of fields $K\in F$ such that
$K\otimes\R\cong K_\infty$.
For ease of notation, we define
\begin{equation*}
M=\frac{1}{\#\Aut(K_\infty)}
\prod_p\Bigl(1-\frac{1}{p}\Bigr)
\left(\sum_{[K_p:\Q_p]=d}\frac{|\Disc(K_p)|_p}{|\Aut(K_p)|}\right).
\end{equation*}

From Theorem \ref{thheusk}, we have
\begin{equation*}
\displaystyle\sum_{\substack{K\in\FF_d^{(\sigma)}\\X<|\Delta(K)|<\delta X}}|
S_K(X^{1/(2d-2)+\kappa})|
\sim (\sqrt{\delta}-1)M \int_{\substack{\alpha\in K_\infty^{\tr=0}\\h(\alpha)<1}}d_0\alpha
\cdot X^{1+(d-1)\kappa}.
\end{equation*}
Thus, using Fact (d) in conjunction with Lemma \ref{lemhskew}, we
obtain
\begin{equation}\label{eqhbfin}
\begin{array}{rcl}
\displaystyle
\sum_{K\in B^{(\delta),\sigma}_X}\sqrt{\frac{X}{|\Disc(K)|}}
&\sim&
\displaystyle
\Bigl(\int_{\substack{\alpha\in K_\infty^{\tr=0}\\h(\alpha)<1}}d_0\alpha\Bigr)^{-1}
\sum_{\substack{K\in\FF_d^{(\sigma)}\\X<|\Delta(K)|<\delta X}}
|S_K(X^{1/(2d-2)+\kappa})|\\[.4in]
&\sim&\displaystyle (\sqrt{\delta}-1)M\cdot X.
\end{array}
\end{equation}

\subsubsection*{Conclusion}
Set $\delta=1+\epsilon$, where $\epsilon$ will eventually tend to
$0$. From \eqref{eqhbfin} and Lemma \ref{lemhskew}, we have
$$
\sum_{\substack{K\in\FF_d^{(\sigma)}\\X<|\Delta(K)|<\delta X}}
\sqrt{\frac{X}{|\Disc(K)|}}
\sim\frac{\epsilon}{2}MX.
$$
Summing over the $\epsilon$-adic ranges in $[1,X]$, we obtain
\begin{equation*}
\sum_{\substack{K\in\FF_d^{(\sigma)}\\1<|\Delta(K)|<X}}t_K\sim
\frac{1}{1+\epsilon}\frac{M}{2}\cdot X,  
\end{equation*}
where $1/\sqrt{\delta}\leq t_K\leq 1$. Now letting $\epsilon\to 0$
yields Theorem \ref{thheu}.
%
%
%
%
%
%

\subsection{Remarks}

\begin{enumerate}

\item The heuristic above is flexible enough to accomodate finitely
  many local conditions on the fields $K$. Indeed, the archemedian
  places conditions are already accommodated by the height function
  $h$. If we want to impose conditions on the Etale algebra $K_p$ at a
  finite $p$, we may simply record that condition on $f$ when making
  the main bijection, and it will only affect the density computation
  in \S2.3.

\item One may ask a more precise equidistribtion question by asking
  about the shape of the lattice $\cO_K^{\tr=0}$,or even better by
  asking about the distribution of the co-volume 1 lattice $|\Disc
  K|^{1/(2d-2)}\cO_K^{\tr=0}$ inside the space of all covolume 1
  lattices in $K_{\infty}^{\tr=0}$ .The natural guess is that it is
  equidistributed with respect to Haar measure on
  $\SL_{\R}(K_{\infty}^{\tr=0})$, and this is proven modulo an
  $\SO$-action by Bhargava-Harron in the cases $d=3,4,5$. By varying
  the height function gives in our heuristic one obtains a family of
  test functions for the resulting measure, but it appears to the
  authors to be insufficient to determine the measure
  completely. However, the heuristic does recover the distribution on
  the theta functions of the resulting lattices.

\item For the case of $d\geq 6$, our main heuristic really requires an
  average over $n$, since there is $O(1)$ expected points for each $n$
  when $d=6$ and fewer than $1$ expected points when $d>6$.
 
\end{enumerate}

\section{The number of cubic fields having bounded discriminant}

Consider a cubic field $K$ over $\Q$ with ring of integers $\cO_K$, and
discriminant $\Delta(K)$. We say that an element $\alpha\in\cO_K$ is
{\it reduced} if the trace of $\alpha$ is $-1,0,$ or $1$. Define
$|\alpha|_{\infty}=\max_{v\mid\infty} |\alpha|_v$, and for a
real number $Y>0$, let $S_K(Y)$ to be the set of reduced elements
$\alpha\in\cO_K\backslash\Z$ satisfying $|\alpha|_\infty<Y$. For a ring
$R$, let $V(R)$ denote the set of monic cubic polynomials
$f(x)=x^3+tx^2+Ax+B$, where $t\in\{-1,0,1\}$ and $A,B\in R$. We denote
the discriminant of $f(x)$ by $\Delta(f)$. Define the height
functon
\begin{equation*}
\begin{array}{rcl}
 h:V(\R)&\to& \R\\[.05in]
 h(f)&:=&\max |\alpha|,
\end{array}
\end{equation*}
where the maximum is taken over the roots of $f$. We then have the
following lemma whose proof is immediate.

\begin{lemma}\label{lembij}
There is a bijection between the following two sets:
\begin{itemize}
\item[{\rm (1)}] The set of pairs $(K,\alpha)$, where $K$ is a cubic
  field $($up to isomorphism$)$, $\alpha\in S_K(Y)$, and $\Delta(K)<X$.
\item[{\rm (2)}] The set of irreducible polynomials $f(x)\in V(\Z)$
  such that $h(f)<Y$ and $\Delta(\Q[x]/f(x))<X$.
\end{itemize}
\end{lemma}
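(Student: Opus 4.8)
The plan is to exhibit the bijection explicitly via the minimal-polynomial map. First I would take a pair $(K,\alpha)$ from set (1): since $\alpha$ is a reduced element of $\cO_K\setminus\Z$, the element $\alpha$ generates $K$ over $\Q$ (as $[K:\Q]=3$ is prime, any element not in $\Q$ generates), so its minimal polynomial $f(x)$ is a monic cubic with integer coefficients (integrality because $\alpha\in\cO_K$), irreducible over $\Q$, with $x^2$-coefficient equal to $-\tr(\alpha)\in\{-1,0,1\}$; hence $f\in V(\Z)$. The roots of $f$ in $\C$ are exactly the archimedean conjugates $|\alpha|_v$, so $h(f)=|\alpha|_\infty<Y$; and $\Q[x]/f(x)\cong K$, so $\Delta(\Q[x]/f(x))=\Delta(K)<X$. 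Thus the map lands in set (2).

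Conversely, given $f\in V(\Z)$ irreducible with $h(f)<Y$ and $\Delta(\Q[x]/f(x))<X$, set $K:=\Q[x]/f(x)$, a cubic field, and $\alpha:=x\bmod f(x)\in K$. Then $\alpha\in\cO_K$ since $f$ is monic with integer coefficients (so $\alpha$ is integral over $\Z$); $\alpha\notin\Z$ since $f$ is irreducible of degree $3$; $\tr_{K/\Q}(\alpha)=-t\in\{-1,0,1\}$ so $\alpha$ is reduced; and $|\alpha|_\infty=h(f)<Y$. Hence $\alpha\in S_K(Y)$ and $\Delta(K)<X$, so we land in set (1). The two maps are visibly mutually inverse: starting from $(K,\alpha)$, passing to $f$ and back gives the field $\Q[x]/f(x)\cong K$ with distinguished element $\alpha$; starting from $f$, passing to $(K,\alpha)$ and back recovers the minimal polynomial of $\alpha$, which is $f$ by construction. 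One should note that "up to isomorphism" in (1) is exactly what makes this well-defined: different embeddings of $K$ give the same abstract pair, and the minimal polynomial only depends on the abstract pair.

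There is essentially no obstacle here; the only point requiring a word of care is the use of primality of the degree to guarantee that every reduced $\alpha\notin\Z$ is a field generator, so that its minimal polynomial genuinely has degree $3$ and lies in $V(\Z)$ rather than being a proper divisor. This is exactly why $S_K(Y)$ is defined with $\alpha\in\cO_K\setminus\Z$ rather than imposing a primitivity condition directly. Everything else is a routine unwinding of definitions, and the proof is indeed "immediate" as claimed.
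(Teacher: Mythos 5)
Your proof is correct and is exactly the routine unwinding the paper has in mind: the paper simply declares the proof immediate, and the intended map is the same minimal-polynomial correspondence $(K,\alpha)\mapsto f$ with inverse $f\mapsto(\Q[x]/f(x),\,x\bmod f)$ that you spell out, including the use of the prime degree to ensure $\alpha\notin\Z$ generates $K$. (The one caveat, present already in the paper's own statement rather than in your argument, is that for a cyclic cubic field distinct conjugate elements share a minimal polynomial, so item (1) should be read as pairs taken up to isomorphism of pairs for the correspondence to be literally bijective.)
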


For a subset $L$ of $V(\Z)$, we denote the set of irreducible elements
in $L$ by $L^\irr$. Given $f\in V(\Z)$ (resp.\ $V(\Z_p)$) with
$\Delta(f)\neq 0$, we define $\ind(f)$ to be the index of $\Z[x]/f(x)$
(resp.\ $\Z_p[x]/f(x)$) in the ring of integral elements in
$\Q[x]/f(x)$ (resp.\ $\Q_p[x]/f(x)$). We then have the following
consequence of Lemma \ref{lembij}.
\begin{equation}\label{eqbijection}
  \sum_{\substack{[K:\Q]=3\\X<|\Delta(K)|< \delta X}} |S_K(Y)|=\sum_{n\geq 1}
  \#\bigl\{f\in V(\Z)^\irr\,:\,\ind(f)=n,h(f)<Y,
  n^2X<|\Delta(f)|<\delta n^2X\bigr\},
\end{equation}
for a fixed constant $\delta>1$.

Recall from \S2 that we denote the density of the set of elements
$f\in V(\Z)$ with index $n$ by $\sigma(n)$. Let $V(\R)_{n^2X,Y}$
denote the set of elements $f(x)\in V(\R)$ such that $h(f)<Y$ and
$n^2X<|\Delta(f)|<\delta n^2X$. Then the main result of this section
is as follows.

\begin{theorem}\label{theorem3}
For some sufficiently small $\kappa>0$, set $Y=X^{1/4+\kappa}$. Then
we have
\begin{equation}\label{eqthm3}
\sum_{\substack{[K:\Q]=3\\X<|\Delta(K)|< \delta X}} |S_K(Y)|=\sum_{n\geq 1}
\sigma(n)\Vol(V(\R)_{n^2X,Y})+o(X).
\end{equation}
\end{theorem}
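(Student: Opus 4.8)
The plan is to count the lattice points in the region $V(\R)_{n^2X,Y}$ prime-by-prime, using a standard geometry-of-numbers count combined with a uniformity estimate to control the tail in $n$. By the bijection \eqref{eqbijection}, it suffices to estimate, for each $n$, the number of $f\in V(\Z)^\irr$ with $\ind(f)=n$ lying in $V(\R)_{n^2X,Y}$, and to show that the sum of these over $n$ matches $\sum_n\sigma(n)\Vol(V(\R)_{n^2X,Y})$ up to $o(X)$. First I would fix a parameter $N_0$ (to be taken large, then $\to\infty$ slowly relative to $X$) and split the sum over $n$ at $n=N_0$. Recall from \S2 that the sum effectively terminates at $n\ll X^{3/4-1/2+o(1)}=X^{1/4+o(1)}$ since $h(f)<Y=X^{1/4+\kappa}$ forces $|\Delta(f)|\ll Y^{6}=X^{3/2+6\kappa}$ and hence $n^2\ll Y^6/X$, i.e.\ $n\ll X^{1/4+3\kappa}$.

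For the main term (small $n\le N_0$): the condition $\ind(f)=n$ is determined by congruence conditions modulo $n^2$ (more precisely, by conditions on $f$ mod a bounded power of each prime dividing $n$; by the theory in \S2.3 these conditions are supported on $p^2\mid$ something of bounded size), and the key point is that $\sigma(n)$ is exactly the density of these congruence conditions in $V(\Z_p)$-space. So I would count points of $V(\Z)$ in the region $V(\R)_{n^2X,Y}$ satisfying the mod-$n^{O(1)}$ congruences defining index $n$. Since $V(\R)\cong\{-1,0,1\}\times\R^2$ with the two real coordinates $A,B$ ranging over a region whose measure is $\Vol(V(\R)_{n^2X,Y})$, and this region (after the $\theta$-rescaling of \S2.2) has all successive minima comparable and is reasonably regular (bounded by $h(f)<Y$, so $A\ll Y^2$, $B\ll Y^3$), a direct lattice-point count gives that the number of such $f$ is $\sigma(n)\Vol(V(\R)_{n^2X,Y}) + O(\text{boundary})$, where the boundary term is $O$ of the codimension-one measure times $n^{O(1)}$. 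Summing the error over $n\le N_0$ contributes $O(N_0^{O(1)}\cdot Y^{?})=o(X)$ provided $N_0$ grows slowly enough; this also uses that irreducibility excludes only $O(Y^2)=o(X)$ polynomials (the reducible ones have a rational root, hence bounded numerator and denominator, giving $O(Y^2)$).

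For the tail ($n>N_0$): I need a uniform upper bound showing that both $\sum_{n>N_0}\sigma(n)\Vol(V(\R)_{n^2X,Y})$ and $\sum_{n>N_0}\#\{f\in V(\Z):\ind(f)=n, h(f)<Y, |\Delta(f)|\asymp n^2X\}$ are $o(X)$ as $N_0\to\infty$. The first is easy: $\sigma(n)\ll_\epsilon n^{-2+\epsilon}$ (as $L(s)=\sum\sigma(n)n^{-s}$ converges for $\Re(s)>-1$, shown in \S2.3) and $\Vol(V(\R)_{n^2X,Y})\ll Y^2\cdot(\text{relative thickness})\ll X/n^2$ roughly, so the tail of the product is $O(X\sum_{n>N_0}n^{-2})=O(X/N_0)$. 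The second — bounding $\#\{f:\ind(f)=n,\dots\}$ uniformly in $n$ by something like $O_\epsilon(n^{-2+\epsilon}X + n^{\epsilon})$ — is \textbf{the main obstacle}, because for large $n$ the region $V(\R)_{n^2X,Y}$ has small volume and one cannot simply invoke an asymptotic; instead one needs that having index divisible by a large square-full number $\approx n^2$ is genuinely restrictive. Here I would use that $\ind(f)=n$ forces $n^2\mid \Delta(f)/\Delta(K)$, and more usefully that the number of $f\in V(\Z)$ in a box of side lengths $(Y^2,Y^3)$ with $m^2\mid\Delta(f)$ is $O(Y^5/m^{2-\epsilon}+Y^{2+\epsilon})$ by a divisor-type argument (writing $\Delta(f)=A^2(\cdots)-\cdots$, or using the parametrization of \S2.3 directly: the index-$n$ locus is a union of $O(n^\epsilon)$ cosets of a sublattice of index $\asymp n^2$), summed with the constraint $|\Delta(f)|\asymp n^2X$. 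Carrying this out — essentially an effective squarefree-sieve estimate for discriminants of monic cubics, uniform in the modulus — is the technical heart; it is where the $o(X)$ rather than a power-saving is all we can afford, and it is the analogue of the "independence of thin congruence conditions" that we could not prove for general $d$ but which is available for $d=3$ via the explicit parametrization. Once this tail bound is in hand, combining with the main-term count and letting $N_0\to\infty$ after $X\to\infty$ yields \eqref{eqthm3}.
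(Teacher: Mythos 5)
Your main-term step contains a genuine gap: you treat the condition $\ind(f)=n$ as a congruence condition modulo $n^{O(1)}$, ``supported at primes dividing $n$,'' with density $\sigma(n)$. This is false: exact index $n$ requires, in addition to $n\mid\ind(f)$, that $p\nmid \ind(f)/n$ for \emph{every} prime $p$, so it is an infinite (thin) family of congruence conditions, and $\sigma(n)$ is a density in $V_d(\Zhat)$, i.e.\ an infinite product over all primes. Already the term $n=1$ of your ``small $n$'' range is the full maximality count, which cannot be obtained by a single finite lattice-point count; one must write $\#\{\ind(f)=n\}=\sum_d\mu(d)\,\#\{f\in\Sigma_{dn}\}$ and control the tail of the $d$-sum uniformly. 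That uniform control is precisely the heart of the problem, so your main-term treatment presupposes what has to be proved. This is exactly why the paper begins \S3.3 with the inclusion--exclusion \eqref{eqsm} and then devotes Lemmas \ref{lemtailestimate} and \ref{lemsmsq} to the ranges $nd>X^{1/4+\delta_1}$ and $\sq(dn)>X^{\delta_2}$, before applying the asymptotic count (Theorem \ref{thcount}) only in the truncated range.

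The tail bound you flag as ``the main obstacle'' is also not within reach of the argument you sketch. Counting $f$ with $n\mid\ind(f)$ via $O(n^\epsilon)$ residue classes for $B$ modulo $n^2$ gives, per fiber in $(k,A)$, roughly $n^\epsilon(Y^3/n^2+1)$, hence a total of order $Y^5/n^{2-\epsilon}+Y^2n^\epsilon$; with $Y=X^{1/4+\kappa}$ the first term summed over $n>N_0$ is about $X^{5/4+5\kappa}/N_0$, which is not $o(X)$ for slowly growing $N_0$. Even after exploiting the localization $|\Delta(f)|\asymp n^2X$ (which shortens the $B$-fibers to length $\ll\min\{Y^3,nX^{1/2}\}$, as in Theorem \ref{thbound}), the purely polynomial-side bound is of size $n^{-1+\epsilon}X^{1+2\kappa}$ per $n$, whose sum still exceeds $X$. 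The paper closes this gap by returning to the field side: Lemma \ref{lemnumel} (a Minkowski-basis estimate comparing $S_K(Y)$ with $S_K(CX^{1/4})$) yields the upper bound $\sum_{|\Delta(K)|\le Z}|S_K(Y)|\ll_\epsilon Z^{1/2+\epsilon}Y^2$ (Theorem \ref{thupbound}), and the observation that a large index forces $|\Delta(K)|\ll Y^6/\ind(f)^2$ to be small then kills the tail (Lemma \ref{lemtailestimate}); moduli with large squarefull part need the separate density bound $\nu(\Sigma_n)\ll_\epsilon n^\epsilon/(q^5m^2)$ of Proposition \ref{propnonmaxpp} (Lemma \ref{lemsmsq}). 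None of these inputs appears in your outline, and your proposed uniform bound $O_\epsilon(n^{-2+\epsilon}X+n^\epsilon)$ is not something the coset-counting argument alone can deliver. (A minor slip: the reducible polynomials with $h(f)<Y$ number $O(Y^3)$, not $O(Y^2)$, though this is still $o(X)$; the paper's Proposition \ref{propred3} gives the refined bound needed inside the sieve.)
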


In conjunction with the results of \S2, Theorem \ref{theorem3}
immediately recovers the Davenport--Heilbronn result on the density of
discriminants of cubic fields.

\begin{theorem}[\cite{DH}]
Let $N^\pm_3(X)$ denote the number of cubic fields $K$ such that
$0<\pm\Delta(K)<X$. Then
\begin{equation*}
N^+_3(X)\sim \frac{1}{12\zeta(3)}X; \quad\quad\quad N^-_3(X)\sim
\frac{1}{12\zeta(3)}X.
\end{equation*}
\end{theorem}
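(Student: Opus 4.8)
The plan is to prove the identity \eqref{eqthm3} by an explicit geometry-of-numbers count of the polynomial side of \eqref{eqbijection}; feeding the resulting identity into the computations of \S2 then yields the Davenport--Heilbronn asymptotic. By Lemma \ref{lembij} and \eqref{eqbijection} the left side of \eqref{eqthm3} equals $\sum_{n\ge1}\#\{f\in V(\Z)^\irr:\ind(f)=n,\ h(f)<Y,\ n^2X<|\Delta(f)|<\delta n^2X\}$. A reducible monic cubic in $V(\Z)$ has an integer root of absolute value $<Y$ and splits off a monic quadratic with coefficients $\ll Y^2$, so there are $O(Y^3)=o(X)$ reducible $f\in V(\Z)$ with $h(f)<Y$; hence the superscript $\irr$ may be dropped. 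The cuspidal range, where $|\Delta(f)|$ is much smaller than $h(f)^6$, is controlled as in \S2.5, and in any case the $n$-sum is supported on $n\ll X^{1/4+O(\kappa)}$, exactly as in the discussion following \eqref{hmain}.

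The structural input is that $\ind(f)=n$ is a \emph{congruence} condition. For a monic cubic over $\Z$, whether $n\mid\ind(f)$ depends only on the class of $f$ modulo $n^2$ (classically, as the $p$-adic Newton polygon of $f$ already determines the maximal order of $\Q_p[x]/f(x)$), and passing from ``$n\mid\ind(f)$'' to ``$\ind(f)=n$'' by inclusion--exclusion isolates only a sparse correction, coming from $f$ for which some $p\mid n$ behaves exceptionally; so, up to that correction (bounded crudely and contributing $o(X)$), the condition $\ind(f)=n$ is detected by the class of $f$ modulo $q_n=n^2$. Write $\mathcal S_n\subset(\Z/q_n\Z)^2$ for the resulting admissible set, which has density $\sigma(n)$ and is concentrated on a congruence neighbourhood of the discriminant locus. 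I would then count $f\in V(\Z)\cap V(\R)_{n^2X,Y}$ with $(A,B)\bmod q_n\in\mathcal S_n$ by fibering over the linear coefficient $A$: for each fixed integer $A$ with $|A|\ll Y^2$, the set of real $B$ satisfying the height and discriminant constraints is a union of $O(1)$ intervals (since $\Delta$ is quadratic in $B$) of total length $|I_A|$, with $\max_A|I_A|\ll nX^{1/2}$ and $\int|I_A|\,dA=\Vol(V(\R)_{n^2X,Y})$ (summing over the three admissible values of the middle coefficient). The one-dimensional counts of integral $B$ in the admissible residue classes produce the expected main term $\sigma(n)\Vol(V(\R)_{n^2X,Y})$, a discretisation error $O\bigl(\sum_A\#\{b:(A,b)\in\mathcal S_n\}\bigr)=O(Y^2\sigma(n)q_n+\sigma(n)q_n^2)$ which sums over $n\ll X^{1/4+O(\kappa)}$ to $o(X)$, and a correlation term.

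That correlation term is the one genuinely delicate point, and it is precisely the obstruction flagged in the introduction --- the independence of a family of thin congruence conditions. It measures the correlation between the $q_n$-periodic admissibility density $A\mapsto\#\{b:(A,b)\in\mathcal S_n\}/q_n$ (of mean $\sigma(n)$) and the fibre length $A\mapsto|I_A|$. Bounding it by the total variation of $|I_A|$ gives a contribution $\asymp n^{1+o(1)}X^{1/2}$ for each $n$, hence of size $\gg X$ after summation over $n\ll X^{1/4+O(\kappa)}$; since every such $n$ contributes comparably to the main term (of total size $\asymp X^{1+2\kappa}$), one cannot simply truncate the $n$-sum, and a genuine equidistribution statement with a power saving is needed. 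For $d=3$ this is available: expanding $\ind(f)=n$ via $\sum_d\mu(d)$ over $nd\mid\ind(f)$ and interchanging sums reduces the problem to a uniform-in-$m$ equidistribution estimate for the codimension-one congruence ``$m\mid\ind(f)$'', whose prime-by-prime behaviour is genuinely independent by the Chinese remainder theorem; the required estimate --- with an error uniform in $m$ saving a power of $X$, so that both the accumulated errors and the tails of the main terms are $o(X)$ for a slowly growing truncation --- follows from the explicit structure theory of cubic rings (equivalently the Delone--Faddeev/Davenport--Heilbronn parametrisation) together with the attendant character-sum bounds. This is the step using special features of $n=3$, and is the one the authors cannot provide for general $d$.

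Granting \eqref{eqthm3}, the computations of \S2.2--2.5 become unconditional for $d=3$: its right-hand side is exactly the quantity evaluated there, so Theorem \ref{thheusk} and Lemma \ref{lemhskew} hold for $d=3$, and the ``cutting off the cusp'' argument of \S2.5 gives Theorem \ref{thheu} for $d=3$; evaluating $\res_{s=-1}L(s)$ and the archimedean volume then yields $N_3^\pm(X)\sim\frac{1}{12\zeta(3)}X$.
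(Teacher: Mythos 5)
Your overall skeleton does match the paper's: count monic cubics by index via \eqref{eqbijection}, discard reducibles, fiber over the linear coefficient, pass between ``$\ind(f)=n$'' and ``$m\mid\ind(f)$'' by M\"obius inversion, prove \eqref{eqthm3}, and then observe that \S2 becomes unconditional for $d=3$ (your last paragraph is exactly how the paper concludes). But at the decisive step there is a genuine gap. After setting up the fibered count you correctly note that, treating the index condition as an arbitrary subset $\mathcal S_n\subset(\Z/n^2\Z)^2$, the correlation between the fiber length $|I_A|$ and the mod-$n^2$ admissibility density is only trivially bounded by $n^{1+o(1)}X^{1/2}$ per $n$, which does not sum; you then dispose of it by asserting that the needed power-saving equidistribution ``follows from the explicit structure theory of cubic rings (Delone--Faddeev/Davenport--Heilbronn parametrisation) together with the attendant character-sum bounds.'' No such estimate is formulated or derived, so this is an appeal to unspecified external machinery rather than a proof --- and importing the Delone--Faddeev/Davenport--Heilbronn apparatus would in any case amount to re-running the classical argument, not establishing \eqref{eqthm3} by the present method.

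The appeal is also unnecessary, and missing the reason why is the real defect. The paper's Lemma \ref{lemnonmax} and Corollary \ref{corA3} show that for squarefree $n$ the condition $n\mid\ind(f)$ is a condition on $B$ modulo $n^2$ whose density in the $B$-direction depends on $(k,A)$ only modulo $n$ and is $\ll_\epsilon n^{-2+\epsilon}$. Hence one splits the $A$-sum into only $mn$ arithmetic progressions (not $mn^2$), applies Davenport's lemma (Proposition \ref{davlem}) to each progression with error $O(Y^3)$, and weights by the small fiber density; this yields Theorem \ref{thcount} with error $O(Y^3m/n^{1-\epsilon}+Y^2mn^\epsilon)$, which with $Y=X^{1/4+\kappa}$ is summable over $n$ to $o(X)$ --- your ``correlation term'' is killed elementarily, with no equidistribution input beyond this congruence structure. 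Your proposal also omits two further ingredients without which the sieve does not close: (i) the prime-power analysis (Lemma \ref{lempropnmpp} and Proposition \ref{propnonmaxpp}, giving $\nu(\Sigma_n)\ll_\epsilon n^\epsilon/(q^5m^2)$ for $n=q^3m$), which both justifies your claim that $\Sigma_n$ is a mod-$n^2$ condition and controls the indices with large squarefull part (Lemma \ref{lemsmsq}); and (ii) the unconditional upper bound $\sum_K|S_K(Y)|\ll_\epsilon X^{1/2+\epsilon}Y^2$ (Theorem \ref{thupbound}, resting on the Minkowski-basis Lemma \ref{lemnumel}), which is what actually truncates the tail $nd>X^{1/4+\delta_1}$ in Lemma \ref{lemtailestimate}; your invocation of \S2.5 cannot play this role, since that subsection is conditional on the Main Heuristic Assumption until \eqref{eqthm3} has been proved, and your ``slowly growing truncation'' is not backed by any bound on the contribution of fields of small discriminant counted with the large weight $|S_K(Y)|$.
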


This section is organized as follows. First, in \S3.1, we prove a
variety of estimates and bounds on sets of elements in $V(\Z)$
satisfying various height, discriminant, and index conditions. Then in
\S3.2, we provide an upper bound for the left hand side of Theorem
\ref{eqthm3}, which is optimal up to a factor of
$O_\epsilon(X^{\epsilon})$. Finally, in \S3.3, we execute an inclusion
exclusion sieve to prove Theorem \ref{theorem3} using the counting
results of the previous two subsections.

\subsection{Counting non-maximal integer monic cubic polynomials}

To estimate the number of lattice points in the bounded subsets of
$V(\R)$, we need the following proposition due to
Davenport~\cite{Davenport1}.

\begin{proposition}\label{davlem}
  Let $\RR$ be a bounded, semi-algebraic multiset in $\R^n$
  having maximum multiplicity $m$, and that is defined by at most $k$
  polynomial inequalities each having degree at most $\ell$.  
  Then the number of integral lattice points $($counted with
  multiplicity$)$ contained in the region $\RR$ is
\[\Vol(\RR)+ O(\max\{\Vol(\bar{\RR}),1\}),\]
where $\Vol(\bar{\RR})$ denotes the greatest $d$-dimensional 
volume of any projection of $\RR$ onto a coordinate subspace
obtained by equating $n-d$ coordinates to zero, where 
$d$ takes all values from
$1$ to $n-1$.  The implied constant in the second summand depends
only on $n$, $m$, $k$, and $\ell$.
\end{proposition}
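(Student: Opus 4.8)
The plan is to follow Davenport's original argument \cite{Davenport1}, which deduces the estimate from a purely geometric ``principle of Lipschitz'' proved by induction on the ambient dimension $n$.

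\emph{Step 1: reduction to a Lipschitz-type hypothesis.} I would first isolate the only two features of the semi-algebraic multiset $\RR$ that the argument uses. First, there is an integer $h=h(k,\ell)$ such that every line parallel to a coordinate axis meets $\RR$ in at most $h$ intervals (a point being a degenerate interval): restricting the at most $k$ defining polynomials to such a line yields at most $k$ univariate polynomials of degree $\le\ell$, whose combined sign pattern changes at most $O(k\ell)$ times. Second, every set obtained from $\RR$ by intersecting with integer coordinate hyperplanes and/or projecting onto coordinate subspaces is again bounded and semi-algebraic of complexity bounded in terms of $n,k,\ell$, hence has $O_{n,k,\ell}(1)$ connected components (effective Tarski--Seidenberg / cylindrical algebraic decomposition). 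The maximum multiplicity $m$ enters only as an overall factor, so I would prove the $m=1$ case and multiply at the end. It then suffices to show that if $\RR\subseteq\R^n$ is bounded, meets every axis-parallel line in $\le h$ intervals, and satisfies the component bound above, then
\[
\bigl|\#(\RR\cap\Z^n)-\Vol(\RR)\bigr|\ \le\ \sum_{j=0}^{n-1} c_{n,j}\,h^{\,n-j}\,V_j(\RR),
\]
where $V_j(\RR)$ is the greatest $j$-dimensional volume of a projection of $\RR$ onto a coordinate $j$-subspace, with the convention $V_0(\RR)=1$ when $\RR\neq\emptyset$. Since $V_j(\RR)\le\max\{\Vol(\bar\RR),1\}$ for every $j\le n-1$ and $h\ll_{k,\ell}1$, this implies the Proposition.

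\emph{Step 2: induction.} The base case $n=1$ is immediate: $\RR$ is a union of $\le h$ intervals, each contributing its length plus an error of modulus $<1$, giving error $\le h=h\,V_0(\RR)$. For the inductive step I would write $\RR_t=\{\,\mathbf y\in\R^{n-1}:(\mathbf y,t)\in\RR\,\}$ for the slice at $x_n=t$; specializing the defining inequalities shows $\RR_t$ satisfies the dimension-$(n-1)$ hypotheses, so the inductive bound applies to each $\RR_t$. Then
\[
\#(\RR\cap\Z^n)-\Vol(\RR)=\sum_{t\in\Z}\bigl(\#(\RR_t\cap\Z^{n-1})-\Vol(\RR_t)\bigr)+\Bigl(\sum_{t\in\Z}\Vol(\RR_t)-\int_{\R}\Vol(\RR_s)\,ds\Bigr).
\]
In the first sum the inductive hypothesis bounds the $t$-th term by $\sum_{j=0}^{n-2}c_{n-1,j}\,h^{\,n-1-j}V_j(\RR_t)$. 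The point is not to estimate this trivially over the nonempty slices: for each fixed $j$, $\sum_{t\in\Z}V_j(\RR_t)$ is itself a Riemann sum for an $(j+1)$-dimensional coordinate projection volume of $\RR$ (using $\pi_S(\RR_t)=(\pi_{S\cup\{n\}}(\RR))_t$ for a coordinate $j$-subset $S$), so by the symmetric-difference estimate below it is $\ll_n V_{j+1}(\RR)+h\,V_j(\RR)$; this also covers the count $\#\{t:\RR_t\neq\emptyset\}$ as the $j=0$ instance. Summing these bounds over $j$ telescopes into $\sum_{j=0}^{n-1}c'_{n,j}\,h^{\,n-j}V_j(\RR)$.

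\emph{Step 3: the Riemann-sum discrepancy.} Writing $F(s)=\Vol(\RR_s)$, I would bound the second sum by $\sum_t\int_t^{t+1}|F(t)-F(s)|\,ds\le\sum_t\int_t^{t+1}\Vol(\RR_t\triangle\RR_s)\,ds$. A point $\mathbf y$ lies in $\RR_t\triangle\RR_s$ only if the vertical line $\{(\mathbf y,u):u\in\R\}$---which meets $\RR$ in $\le h$ intervals, hence has $\le 2h$ endpoints---has an endpoint of $\RR$ whose last coordinate lies between $s$ and $t$; for $s\in[t,t+1]$ this confines the endpoint to $[t,t+1]$, and each endpoint is counted for at most two values of $t$. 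Integrating over $\mathbf y$ in the projection of $\RR$ forgetting the last coordinate gives a bound $\ll h\,V_{n-1}(\RR)$. The same computation, applied to the coordinate projections $\pi_{S\cup\{n\}}(\RR)$, is exactly what was invoked in Step 2. Combining the two sums completes the induction, and reinstating the factor $m$ yields the Proposition.

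\emph{Main obstacle.} The difficulty I anticipate is not any single estimate but the bookkeeping that prevents the errors from compounding through the $n$ levels of the recursion: one must carry the inductive hypothesis in the form of the \emph{full} weighted sum of all lower-dimensional projection volumes rather than just the leading one, and one must repeatedly recognize sums over integer slices as Riemann sums controlled---via the endpoint/symmetric-difference argument---by projection volumes one dimension higher. Making this recursion close, rather than producing a spurious term such as $V_{n-1}(\RR)^2$, is the crux.
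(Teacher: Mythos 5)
The paper does not prove Proposition \ref{davlem} at all; it simply quotes it as a result of Davenport \cite{Davenport1}. Your sketch is a correct reconstruction of exactly that cited argument --- Davenport's induction on the dimension via integer slices, with the Riemann-sum discrepancy controlled by interval endpoints on axis-parallel lines, and with the effective Tarski--Seidenberg input (the subject of Davenport's corrigendum) correctly invoked to give the projections bounded complexity --- so it is essentially the same approach as the paper's source.
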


\subsubsection*{Congruence conditions on polynomials with index
  divisible by an integer $n$}

Let $p$ be a prime. The following criterion for $f\in V(\Z)$ with
$\Delta(f)\neq 0$ to have index divisible by a prime $p$ follows
immediately from \cite[Theorem 14]{BST} (originally due to work of
Davenport--Heilbronn \cite{DH}).

\begin{lemma}\label{lemnonmax}
An element $f(x)\in V(\Z)^\irr$ has index divisible by a prime $p$ if
and only if there exists $\bar{r}\in \Z/p\Z$ such that for every lift
$r\in\Z$ of $\bar{r}$, we have $p^2\mid f(r)$ and $p\mid f'(r)$, where
$f'(x)$ is the derivative of $f(x)$.
\end{lemma}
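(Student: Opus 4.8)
The plan is to translate the index-divisibility condition into the language of Dedekind's criterion applied to the prime $p$ in the order $\Z[x]/f(x)$, and then to unwind what this says about roots of $f$ mod $p$. Recall that $p \mid \ind(f)$ precisely when $\Z_p[x]/f(x)$ fails to be the maximal order of $\Q_p[x]/f(x)$, i.e. when $\Z_p[x]/f(x)$ is not regular at the prime $\mathfrak{p}$ lying over $p$. For a monic cubic $f$, non-maximality at $p$ forces the ramification/splitting type mod $p$ to be of a special shape: by the factorization theory of $p$ in monic polynomial rings, $\Z_p[x]/f(x)$ is maximal at $p$ unless $\bar f(x) \in \F_p[x]$ has a repeated factor \emph{and} that repetition is ``not resolved'' at the level of $p^2$. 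Since $\deg f = 3$, a repeated factor must be a repeated \emph{linear} factor (the only possibilities are $\bar f = (x-\bar r)^2 g(x)$ with $\deg g = 1$, or $\bar f = (x - \bar r)^3$), so the obstruction is always concentrated at a single residue $\bar r \in \F_p$.

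Concretely, I would argue as follows. If $p \mid \ind(f)$, pick $\bar r$ to be the residue at which $\bar f$ has a repeated root; then $p \mid f(r)$ and $p \mid f'(r)$ for any lift $r$. The content of \cite[Theorem 14]{BST} (equivalently, Dedekind's criterion in this degree-$3$ setting) is that this repeated root genuinely contributes to the index exactly when additionally $p^2 \mid f(r)$. One checks that the conditions $p^2 \mid f(r)$ and $p \mid f'(r)$ are independent of the choice of lift $r$ of $\bar r$: replacing $r$ by $r + pk$ changes $f(r)$ by $p f'(r) k + O(p^2) \equiv 0 \pmod{p^2}$ using $p \mid f'(r)$, and changes $f'(r)$ by $p f''(r)k \equiv 0 \pmod p$. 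Conversely, given such an $\bar r$, one reverses the computation: the two congruences say $\bar f$ has $\bar r$ as a multiple root and that the ``defect'' of $f$ at the Newton-polygon slope forces $\Z_p[x]/f(x)$ to be non-maximal, hence (since $f$ is irreducible over $\Q$, so $\Delta(f) \neq 0$ and $\Q[x]/f(x)$ is a field) $p \mid \ind(f)$. The irreducibility hypothesis is used only to guarantee we are genuinely inside a cubic field and the index is well-defined.

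The main obstacle is bookkeeping the precise equivalence with \cite[Theorem 14]{BST}: that reference is presumably phrased in terms of a criterion for $p^2 \mid \Delta(f)$ versus $p^2 \mid \Delta(K)$, or in terms of explicit reduced forms of binary cubics, and one must match it to the ``$\exists \bar r$ with $p^2 \mid f(r)$, $p \mid f'(r)$'' formulation. Since the lemma explicitly cites \cite[Theorem 14]{BST} and attributes it to Davenport--Heilbronn, I expect the author's proof to be essentially a one-line dereferencing plus the short lift-independence check sketched above; the substantive work has been exported to that citation, so I would not reprove the cubic-specific Dedekind criterion from scratch.
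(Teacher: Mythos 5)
Your proposal is correct and matches the paper, which in fact offers no proof at all: it simply states that the lemma follows immediately from \cite[Theorem 14]{BST} (originally Davenport--Heilbronn), exactly as you anticipated. Your Dedekind-criterion translation and the lift-independence check (using $f(r+pk)\equiv f(r)+pkf'(r)\pmod{p^2}$ together with $p\mid f'(r)$) are a correct fleshing out of that citation, so there is nothing to add.
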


Let $f(x)=x^3+kx^2+Ax+B$ be an element of $V(\Z)$. It follows from
Lemma \ref{lemnonmax} that the residue classes of $A$ and $k$ modulo
$p$ and the residue class of $B$ modulo $p^2$ determine whether or not
$p\mid \ind(f)$. More precisely, we have the following immediate
consequence of Lemma \ref{lemnonmax}.

\begin{cor}\label{corA3}
Let $p$ be a fixed prime and let $k$ and $A$ be fixed integers. The
number of $\bar{B}\in\Z/p^2\Z$ such that $p\mid\ind(x^3+kx^2+Ax+B)$,
for lifts $B\in\Z$ of $\bar{B}$, is determined by the residue classes
of $k$ and $A$ in $\Z/p\Z$. In fact, the number of such
$\bar{B}\in\Z/p^2\Z$ is equal to the number of roots modulo $p$ of
$3x^2+2kx+A$.
\end{cor}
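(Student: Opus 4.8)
The plan is to deduce everything directly from Lemma~\ref{lemnonmax} applied to $f(x)=x^3+kx^2+Ax+B$, whose derivative $f'(x)=3x^2+2kx+A$ does not involve $B$. The first observation is that, for a given $\bar r\in\Z/p\Z$, the congruence $p\mid f'(r)$ holds for one lift $r$ if and only if it holds for all of them, i.e.\ exactly when $\bar r$ is a root of the reduction $\overline{f'}\in\F_p[x]$. Thus by Lemma~\ref{lemnonmax} we have $p\mid\ind(f)$ if and only if there is a root $\bar r$ of $\overline{f'}$ with $p^2\mid f(r)$.

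The second step is to show that, for a \emph{fixed} root $\bar r$ of $\overline{f'}$, the residue $f(r)\bmod p^2$ does not depend on the choice of lift $r$: expanding $f(r+p)=f(r)+pf'(r)+p^2(3r+k)+p^3$ and using $p\mid f'(r)$ gives $f(r+p)\equiv f(r)\pmod{p^2}$, hence the same holds for every lift. Consequently the condition ``$p^2\mid f(r)$'' is equivalent to ``$B\equiv -(r^3+kr^2+Ar)\pmod{p^2}$'' for any chosen lift $r$, i.e.\ it singles out a unique residue class $\bar B_{\bar r}\in\Z/p^2\Z$. In particular whether $p\mid\ind(x^3+kx^2+Ax+B)$ depends only on $\bar B\in\Z/p^2\Z$, and the set of such ``bad'' $\bar B$ is precisely $\{\bar B_{\bar r}:\overline{f'}(\bar r)=0\}$. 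Its size therefore equals the number of roots of $\overline{f'}$ in $\F_p$, provided the map $\bar r\mapsto\bar B_{\bar r}$ is injective on those roots — and this injectivity is the only point that needs a genuine argument.

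For the injectivity, suppose $\bar r_1\neq\bar r_2$ are roots of $\overline{f'}$ with $\bar B_{\bar r_1}=\bar B_{\bar r_2}$; taking $B$ to be a common lift, the monic cubic $\bar f\in\F_p[x]$ then vanishes at $\bar r_1$ and $\bar r_2$ together with its derivative, so $(x-\bar r_1)^2$ and $(x-\bar r_2)^2$ both divide $\bar f$. Since these factors are coprime, their product, of degree $4$, divides the cubic $\bar f$, a contradiction; hence the map is injective. Finally, $\overline{f'}=3x^2+2kx+A$ over $\F_p$ depends only on the residues of $k$ and $A$ modulo $p$, so the number of its roots — and therefore the number of bad $\bar B\in\Z/p^2\Z$ — is determined by $k\bmod p$ and $A\bmod p$, which is the assertion of the corollary. (The degenerate case $\overline{f'}\equiv 0$, which occurs only for $p=3$ with $3\mid k$ and $3\mid A$, is handled by exactly the same argument under the convention that the zero polynomial has $p$ roots.)
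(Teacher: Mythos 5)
Your proof is correct and takes the paper's intended route: the paper states Corollary \ref{corA3} as an immediate consequence of Lemma \ref{lemnonmax} with no further argument, and your write-up supplies exactly the missing details --- that $p\mid f'(r)$ depends only on $\bar r$, that $f(r)\bmod p^2$ is then independent of the lift $r$, and that the map $\bar r\mapsto \bar B_{\bar r}$ is injective because otherwise the cubic $\bar f$ would be divisible by a degree-$4$ polynomial. The degenerate case $p=3$, $3\mid k$, $3\mid A$ is also handled consistently with the statement, so nothing further is needed.
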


For a positive integer $n$, let $\Sigma_n$ denote the set of
polynomials $f(x)$ in $V(\Z)$ with nonzero discriminant such that
$n\mid\ind(f)$. We will need a version of Corollary \ref{corA3} for
arbitrary integers $n$. To analyze the case when $n$ is divisible by a
prime power $p^\ell$ with $\ell\geq 2$, we write the set
$\Sigma_{p^\ell}$ as the disjoint union
\begin{equation*}
\Sigma_{p^\ell}=\Sigma_{p^\ell}^{(1)}\cup\Sigma_{p^\ell}^{(2)},
\end{equation*}
where $\Sigma_{p^\ell}^{(1)}$ denotes the set of elements $f(x)\in
\V_{p^\ell}(k,A)$ such that the image of $x$ in $R_f:=\Z[x]/f(x)$ is
not a multiple of $p$ in $R_f\backslash\Z$. Then we have the following
lemma.

\begin{lemma}\label{lempropnmpp}
An element $f(x)\in V(\Z)^\irr$ belongs to $\Sigma_{p^\ell}^{(1)}$ if
and only if there exists $\bar{r}\in\Z/p^\ell\Z$ such that for every
lift $r\in\Z$ of $\bar{r}$, we have $\frac{1}{4}p^{2\ell}\mid f(r)$
and $\frac{1}{2}p^\ell\mid f'(r)$.
\end{lemma}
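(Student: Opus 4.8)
The plan is to reduce the statement to an explicit computation with cubic rings over $\Z_p$. Both ``$p^\ell \mid \ind(f)$'' and the condition defining $\Sigma^{(1)}$ depend only on $f$ modulo a fixed power of $p$, so it is harmless to pass to $\Z_p$: write $\theta$ for the image of $x$ in $\Q_p[x]/f(x)$, set $R = \Z_p[\theta]$, and let $\cO$ be the ring of integral elements of $\Q_p[x]/f(x)$. The first thing I would do is unwind what $\Sigma^{(1)}$ means locally. The $p$-primary part of $\cO/R$ is cyclic exactly when the image of $R$ in $\cO/p\cO$ has $\F_p$-dimension at least $2$, that is, when $1$ and $\bar\theta$ are $\F_p$-independent there, i.e.\ when $\theta \notin \Z_p + p\cO$; this is precisely the assertion that $x$ is, modulo $\Z$, not a multiple of $p$ in the overorder. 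So $f \in \Sigma^{(1)}_{p^\ell}$ says exactly that $p^\ell \mid [\cO : R]$ and that this $p$-primary quotient is cyclic.

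Next I would convert this into the existence of an overorder in normal form. If the $p$-part of $\cO/R$ is cyclic of order $p^m$ with $m \geq \ell$, then $R' := R + p^{m-\ell}\cO$ is an order with $R \subseteq R' \subseteq \cO$, $[R':R] = p^\ell$, $R'/R \cong \Z/p^\ell\Z$, and still $\theta \notin \Z_p + pR'$; conversely, the existence of any such $R'$ forces $p^\ell \mid [\cO:R]$ and cyclicity. Since $R'/R$ is cyclic and $\theta \notin \Z_p + pR'$, the generator of $R'/R$ can be chosen with unit $\theta^2$-coordinate relative to the basis $1, \theta, \theta^2$ of $R$, and column reduction then puts $R'$ in the shape $R' = \Z_p\langle 1,\, \theta,\, \tfrac{\theta^2 + b\theta + a}{p^\ell}\rangle$ for some $a, b \in \Z_p$. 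Thus the lemma is equivalent to: there exist $a, b \in \Z_p$ for which $\Z_p\langle 1,\, \theta,\, \tfrac{\theta^2 + b\theta + a}{p^\ell}\rangle$ is closed under multiplication. Imposing closure is mechanical: using $\theta^3 = -k\theta^2 - A\theta - B$ to rewrite $\theta\cdot\phi$ and $\phi^2$ (with $\phi = \tfrac{\theta^2+b\theta+a}{p^\ell}$) in the basis $1, \theta, \phi$ produces a short list of congruences, one of which pins down $a$ modulo $p^\ell$ in terms of $b$, the rest becoming congruences in $b$ modulo $p^\ell$ and $p^{2\ell}$ whose coefficients are built from $k, A, B$.

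The final step, and the one I expect to be the main obstacle, is to identify this system of congruences in $b$ with the stated condition on $f(r)$ and $f'(r)$. For odd $p$ this is transparent: completing the square, $x^2 + bx + a = (x + b/2)^2 + (a - b^2/4)$ with $b/2 \in \Z_p$, so setting $\psi = \theta + b/2$ (a root of $f(x - b/2)$, whose linear and constant coefficients are $f'(r)$ and $f(r)$ for $r := -b/2$), the congruences collapse to $p^\ell \mid f'(r)$ and $p^{2\ell} \mid f(r)$, which is the assertion for odd $p$ since $2$ and $4$ are units; running the same computation backwards (take $b = -2r$, solve for $a$, check the remaining congruences) gives the converse. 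The prime $p = 2$ is where the work lies: one cannot divide $b$ by $2$, and the correct substitution is $\psi = 2\theta + b$, whose minimal polynomial is $8f\bigl((x-b)/2\bigr)$ — this trades level $p^\ell$ for level $p^{\ell+2}$ and is exactly what produces the factors $4=2^2$ on $f(r)$ and $2$ on $f'(r)$ appearing in the statement. Carrying this through demands careful tracking of $2$-adic valuations across every congruence, together with a check that eliminating $a$ leaves no spurious extra constraint on $b$; it is worth pinning down the resulting exponents against a couple of small explicit polynomials, since the $p = 2$ bookkeeping is the delicate point of the whole argument.
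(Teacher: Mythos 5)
Your route (reformulating $\Sigma^{(1)}_{p^\ell}$ via the module structure of $\cO/R$ and then classifying overorders $R'\supseteq R=\Z_p[\theta]$ with $R'/R$ cyclic of order $p^\ell$ by an explicit multiplication-table computation) is genuinely different from the paper's proof, which works with the roots of $f$ in $\overline{\Q}_p$, splits into the cases of splitting type $(1^21)$ and $(1^3)$ modulo $p$, and extracts the congruences from the shift $r=(\sigma_1+\sigma_2)/2$ (the source of the factors $\frac12$ and $\frac14$, uniformly in $p$, including $p=2$). However, your argument has a genuine gap at the step where you pass back from the overorder to membership in $\Sigma^{(1)}_{p^\ell}$: you assert that ``the existence of any such $R'$ forces $p^\ell\mid[\cO:R]$ and cyclicity.'' The divisibility is clear, but the cyclicity (equivalently, $\theta\notin\Z_p+p\cO$) does \emph{not} follow, even with your extra hypothesis $\theta\notin\Z_p+pR'$, because $pR'$ is much smaller than $p\cO$. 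Concretely, take $\mu$ a root of $x^3-x-1$ and $\theta=p\mu$, so $f(x)=x^3-p^2x-p^3$: here $\theta\in p\cO$, so $f$ lies in $\Sigma^{(2)}$ and the $p$-part of $\cO/R$ is $\Z/p\oplus\Z/p^2$, yet $R'=\Z_p+\Z_p\theta+\Z_p(\theta^2/p)$ is an order in your normal form with $R'/R\cong\Z/p$ and one checks $\theta\notin\Z_p+pR'$ (and for suitable $p$ the analogous phenomenon occurs with $\ell=2$ as well, via $r\equiv p r_0$ with $r_0$ a root of $x^3-x-1$ mod $p$). So your chain of equivalences breaks precisely in the situation of a triple root mod $p$, which is exactly the case the paper isolates (splitting type $(1^3)$) and disposes of by using the $\Sigma^{(1)}$-hypothesis to force $\ell=1$ before invoking the Dedekind-type criterion; your write-up never confronts this case, and any correct proof must.

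Two smaller points. First, the ``$\theta^2$-coordinate is a unit'' normalization is fine, but it does not need the hypothesis $\theta\notin\Z_p+pR'$: a generator with non-unit $\theta^2$-coefficient already contradicts closure under multiplication by $\theta$ (the $\theta^2$-component of $\theta\phi$ is a unit over $p^\ell$), so that hypothesis is doing no work where you invoke it --- a sign that the real content of the ``if'' direction is located elsewhere, namely in the gap above. Second, the case $p=2$, which is the only place the constants $\frac14$ and $\frac12$ in the statement are actually tested, is left as a plan (``careful tracking of $2$-adic valuations'') rather than carried out; in the paper this case is handled at no extra cost because the shift $r=(\sigma_1+\sigma_2)/2$ is naturally a half-integer, whereas in your setup it is a separate computation that still needs to be done.
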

\begin{proof}
We start with the case when the splitting type of $f$ at $p$ is
$(1^21)$.  Let $\sigma_1$, $\sigma_2$, and $\theta$ denote the roots
of $f$ in $\overline{\Q}_p$, with
$\sigma_1\equiv\sigma_2\pmod{p}$. Then $\theta$ belongs to $\Z_p$ and
$\sigma_1$ and $\sigma_2$ either belong to $\Z_p$ and are congruent
modulo $p$, or are conjugate elements in the ring of integers of
integers of a ramified extension of $\Q_p$. For
$r=(\sigma_1+\sigma_2)/2$, the roots of $f(x+r)$ are
$\theta'=\theta-r$, $\sigma_1'=\sigma_1-r$, and
$\sigma_2'=\sigma_2-r=-\sigma_1'$. The $p$-part of the discriminant
$\Delta(f)=\Delta(f(x-r))$ is then computed to be equal to the
$p$-part of $(\sigma_1'-\sigma_2')^2=4\sigma_1'^2$. It follows that
$\frac{1}{2}p^\ell$ divides $\sigma_1'$ and $\sigma_2'$, and therefore
that $f(x+r)$ is of the form $x^3+ax^2+\frac{1}{2}p^\ell
bx+\frac{1}{4}p^{2\ell}c$. Clearly, this remains true for all $r_1$
that are congruent to $r$ modulo $p^\ell$. Evaluating $f(x)$ and
$f'(x)$ at $r$ yields the lemma for this case.

Next assume that the splitting type of $f$ is $(1^3)$. For an
appropriate integer $r$, replace $f(x)$ by $f(x+r)$ such that the
triple zero of $f$ modulo $p$ is at $0$, i.e., $p\mid a$, $p\mid b$,
and $p\mid c$. Since $p\mid\ind(f)$ we must have $p^2\mid c$. Since
$\alpha$ is not a multiple of $p$, it cannot simultaneously happen
that $p^2\mid b$ and $p^3\mid c$. Therefore the only possibilities are
that $p\parallel b$, $p^2\mid c$ or that $p^2\mid b$ and $p^2\parallel
c$. In either case, it is easy to check that the $p$-part of the index
of $f$ is $p$ and that therefore $\ell=1$. The result now follows from
Lemma \ref{lemnonmax}.
%
%
\end{proof}

For a set $S\subset V(\Z)$, let $\nu(S)$ denote the volume of the
closure of $S$ in $V(\hat{\Z})$. For integers $k$ and $A$, let
$S(k,A)$ denote the set of integers $B$ such that $x^3+kx^2+Ax+B$
belongs to $S$. Let $\nu(k,A;S)$ denote the volume of the closure of
$S(k,A)$ in $\Z_p$. Here, we computes volumes in $V(\Z_p)$ and $\Z_p$
in terms of Euclidean measure normalized so that $V(\Z_p)$ and $\Z_p$,
respectively, have volume $1$. Then we have the following result.

\begin{proposition}\label{propnonmaxpp}
Let $n$ be a positive integer and write $n=q^3m$, where $m$ is
cube-free. Then
\begin{itemize}
\item[{\rm (a)}] The set $\Sigma_n$ is defined via congruence
  conditions modulo $n^2$.
\item[{\rm (b)}] For $k,A\in\Z$, the density of
  $\Sigma_n(k,A)$ depends only on the congruence classes of $k$ and
  $A$ modulo $n$.
\item[{\rm (c)}] We have the bound
  $\nu(\Sigma_n)\ll_\epsilon\displaystyle\frac{n^\epsilon}{q^5m^2}.$
\end{itemize}
\end{proposition}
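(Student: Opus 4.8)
The plan is to prove the three parts in order, bootstrapping from the prime-power analysis of Lemma~\ref{lempropnmpp} via the Chinese Remainder Theorem and a multiplicativity property of $\sigma$. For part~(a), I would factor $n=\prod p^{\ell_p}$ and use the fact that the index of $\Z[x]/f(x)$ in the maximal order is a product of local indices, so $n\mid\ind(f)$ if and only if $p^{\ell_p}\mid\ind_p(f)$ for every $p\mid n$. Lemma~\ref{lempropnmpp} (together with Corollary~\ref{corA3} for the case $\ell=1$) shows that membership of $f$ in $\Sigma_{p^\ell}$ is cut out by congruences modulo $p^{2\ell}$ on the coefficients: the conditions $\tfrac14 p^{2\ell}\mid f(r)$ and $\tfrac12 p^\ell\mid f'(r)$, after translating $x\mapsto x+r$, are congruence conditions modulo $p^{2\ell}$ on $A$ and $B$ (note $p^{2\ell}\mid B$-type conditions, $p^\ell\mid A$-type conditions). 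By CRT these patch to congruence conditions modulo $\prod p^{2\ell_p}=n^2$, giving~(a). Part~(b) follows from the same description: inspecting the conditions in Lemma~\ref{lempropnmpp}, once $k$ and $A$ are fixed modulo $p^\ell$ the set of admissible $B$ modulo $p^{2\ell}$ is determined (the derivative condition $p\mid f'(r)=3r^2+2kr+A$ only sees $k,A$ mod $p$, and more precisely mod $p^\ell$ after the sharper analysis), so $\Sigma_n(k,A)$ depends only on $k,A\bmod n$ by CRT.

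The substance is part~(c), the bound $\nu(\Sigma_n)\ll_\epsilon n^\epsilon/(q^5m^2)$ where $n=q^3m$ with $m$ cube-free. By multiplicativity of $\nu$ over the prime-power factorization (CRT again), it suffices to prove the bound prime-by-prime: $\nu(\Sigma_{p^\ell})\ll_\epsilon p^{\ell\epsilon}\cdot p^{-e(\ell)}$ where the exponent $e(\ell)$ is chosen so that the product over primes reproduces $q^{-5}m^{-2}=\prod_{p}p^{-(5\cdot\lfloor \ell_p/3\rfloor\cdot ?\, )}$; concretely one wants $e(1)=e(2)=2$ (contributing the $m^{-2}$, since $m$ collects the exponents $1$ and $2$) and $e(\ell)$ growing like $\tfrac53\ell$ for $\ell\geq 3$ so that a full cube $p^3\mid n$ contributes $p^{-5}$. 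To get these, I would estimate $\nu(\Sigma_{p^\ell})$ directly from Lemma~\ref{lempropnmpp}: after translating so the relevant root is at $0$, the condition forces $p^\ell\mid A$ (up to the factor $\tfrac12$) and $p^{2\ell}\mid B$ (up to $\tfrac14$), while the choice of $\bar r\in\Z/p^\ell\Z$ contributes a factor of at most $p^{\ell}$ (and in fact $O(p^{\ell(?)})$ after accounting for the derivative condition pinning $\bar r$ to within $O(1)$ choices once the discriminant valuation is fixed). Summing the local density over the $O(1)$ or $O(p^{\epsilon})$ choices of $\bar r$, the translation $A\mapsto A$ is volume-preserving and one lands on $\nu(\Sigma_{p^\ell})\ll p^{\ell}\cdot p^{-\ell}\cdot p^{-2\ell}=p^{-2\ell}$ as a first cut, which already beats $p^{-5}$ once $\ell\geq 3$; a more careful accounting of how $\Sigma_{p^\ell}^{(1)}$ versus $\Sigma_{p^\ell}^{(2)}$ (the "$x$ divisible by $p$" part, handled by descent $f(x)\mapsto p^{-3}f(px)$ type scaling) split up sharpens the exponent to exactly what is needed.

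I expect the main obstacle to be precisely this bookkeeping of exponents in part~(c): tracking, for a prime power $p^\ell$, how many residue classes $\bar r\bmod p^\ell$ can actually occur (this is governed by the splitting type and the valuation of the discriminant, via Lemma~\ref{lemnonmax} and Lemma~\ref{lempropnmpp}), and how the recursive structure of $\Sigma_{p^\ell}^{(2)}$—where $x$ is a multiple of $p$ in $R_f\setminus\Z$, so $f$ essentially comes from a lower-height polynomial after the substitution $x\mapsto px$—contributes to the density. The clean statement $\nu(\Sigma_n)\ll_\epsilon n^\epsilon/(q^5m^2)$ suggests that the worst case is $n$ squarefree (each prime contributing $p^{-2}$, matching $m^{-2}$ with $q=1$), and that full cubes are genuinely cheaper ($p^{-5}$ rather than $p^{-6}$, reflecting that an index of exactly $p^3$ from a single prime is possible but rigid). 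Once the per-prime bound is established with the right exponent, assembling the global bound is a routine Euler-product estimate, with the $n^\epsilon$ absorbing the divisor-function-type losses from the number of choices of $\bar r$ and from primes with $2\mid\ell$ where the constant (not the exponent) degrades.
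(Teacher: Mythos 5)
Your skeleton (CRT reduction to prime powers, the congruence description of Lemma \ref{lempropnmpp}, a descent in the case where $x$ becomes divisible by $p$) is the same as the paper's, but the execution has a genuine gap exactly at the point that carries the content of part (c). Lemma \ref{lempropnmpp} characterizes only $\Sigma^{(1)}_{p^\ell}$, not $\Sigma_{p^\ell}$, so your ``first cut'' $\nu(\Sigma_{p^\ell})\ll p^{\ell}\cdot p^{-\ell}\cdot p^{-2\ell}=p^{-2\ell}$ is not a bound for the full set, and for $\ell\geq 3$ it is in fact false: if some translate $f(x+r)$ has coefficients divisible by $p,p^2,p^3$, then $\alpha-r=p\beta$ with $\beta$ integral, $\Z[\alpha]\subset\Z[\beta]$ has index $p^3$, and hence $p^3\mid\ind(f)$ automatically; the density of such $f$ is of order $p^{-5}$, which exceeds your claimed $p^{-6}$ for $\ell=3$ (and indeed the exponent $q^{-5}$ in the proposition is sharp for cubes). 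So the $\Sigma^{(2)}$ analysis is not a ``sharpening'' of the exponent, as you present it --- it is the dominant contribution and the entire reason the bound is $q^{-5}m^{-2}$ rather than $n^{-2}$. The missing step is the quantitative induction: for $f\in\Sigma^{(2)}_{p^\ell}$ there is an $r$ determined modulo $p$ with $f(x+r)=x^3+k'x^2+A'x+B'$, $p\mid k'$, $p^2\mid A'$, $p^3\mid B'$, and $x^3+(k'/p)x^2+(A'/p^2)x+(B'/p^3)\in\Sigma_{p^{\ell-3}}$, whence $\nu(\Sigma^{(2)}_{p^\ell})\ll p\cdot p^{-6}\,\nu(\Sigma_{p^{\ell-3}})=p^{-5}\nu(\Sigma_{p^{\ell-3}})$; combined with $\nu(\Sigma^{(1)}_{p^\ell})\ll p^{-2\ell}$ and the base cases $\ell=1,2$, this gives $\nu(\Sigma_{p^\ell})\ll p^{-5a-2b}$ for $\ell=3a+b$, $b\in\{0,1,2\}$, which assembles to $n^{\epsilon}/(q^5m^2)$. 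You explicitly flag this recursion as ``the main obstacle'' and gesture at the substitution $x\mapsto px$, but you never state or prove the $p^{-6}$-per-step, $p$-choices-of-$r$ estimate, so part (c) is not established by your argument.

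The same omission affects parts (a) and (b): your argument covers $\Sigma^{(1)}_{p^\ell}$ (and $\ell=1$ via Lemma \ref{lemnonmax}/Corollary \ref{corA3}), but for $\ell\geq 3$ membership in $\Sigma^{(2)}_{p^\ell}$ must also be shown to be a condition modulo $p^{2\ell}$ with the density property in (b); this again follows from the descent by induction (the conditions on $k',A',B'$ together with membership of the rescaled polynomial in $\Sigma_{p^{\ell-3}}$ are conditions modulo $p^{2\ell-5},p^{2\ell-4},p^{2\ell-3}$ respectively), and even $\ell=2$ needs a word about why the $\Sigma^{(2)}$ piece causes no trouble. All of this is patchable by running the induction you allude to, but as written the proposal does not prove the proposition.
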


\begin{proof}
By the Chinese Remainder Theorem, we may assume that $n=p^\ell$ is a
prime power. We proceed by induction on $\ell$. For $\ell=1$ all three
claims follow from Lemma~\ref{lemnonmax}.  Furthermore, for all
$\ell\geq 1$, the claims of the proposition with $\Sigma_{p^\ell}$
replaced by $\Sigma^{(1)}_{p^\ell}$, follow from
Lemma~\ref{lempropnmpp}.  This also yields the required results for
$\ell=2$, since $\Sigma_{p^2}=\Sigma_{p^2}^{(1)}$.

We now assume that $\ell\geq 3$ and prove the claims of the
proposition with $\Sigma_{p^\ell}$ replaced with
$\Sigma_{p^\ell}^{(2)}$. To this end, let $f(x)=x^3+kx^2+Ax+B$ be an
element of $\Sigma_{p^\ell}^{(2)}$ and let $\alpha$ denote the image
of $x$ in $\Z[x]/f(x)$. Then there exists $r\in\Z$, defined uniquely
modulo $p$, such that $\alpha-r$ is a multiple of $p$.  The polynomial
$f(x+r)=x^3+k'x^2+A'x+B'$ satisfies $p\mid k'$, $p^2\mid A'$ and
$p^3\mid B'$. Furthermore, we have
$$x^3+(k'/p)x^2+(A'/p^2)x+(B'/p^3)\in \Sigma_{p^{\ell-3}}.$$ Parts (a)
and (b) of the proposition follow immediately from induction. Part (c)
also follows since for each fixed $r$, the volume of the corresponding
subset of cubic polynomials with index divisible by $p^\ell$ is
bounded by $O(p^{-6}\cdot \nu(\Sigma_{p^{\ell-3}}))$. Since $r$ is
defined modulo $p$, Part (c), and the proposition, follows.
\end{proof}

\subsubsection*{Bounds on reducible elements}

For squarefree integers $n$, we obtain a bound on the number of
reducible polynomials in $\Sigma_n$ having bounded height.
For a set $S\subset V(\Z)$, let $S^\red$ denote the set of reducible
polynomials in $S$. We have the following result.
\begin{proposition}\label{propred3}
  Let $n$ be a positive squarefree integer. Then
  \begin{equation*}
    \#\{f\in\Sigma_n^\red:h(f)<Y\}\ll_\epsilon
Y^3/n^{1-\epsilon}+n^\epsilon Y.
  \end{equation*}
\end{proposition}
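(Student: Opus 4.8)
The plan is to bound reducible $f\in\Sigma_n$ of height $<Y$ by splitting according to the factorization type. A reducible monic cubic $f(x)=x^3+kx^2+Ax+B\in V(\Z)$ with $k\in\{-1,0,1\}$ either has a rational (hence integral) linear factor, so $f(x)=(x-a)g(x)$ with $g$ a monic quadratic, or $f$ factors as a product of three integral linear factors (a subcase of the former). Since $h(f)<Y$, every root of $f$ has absolute value $<Y$, so $|a|<Y$ and the coefficients of $g(x)=x^2+(k+a)x+c$ are bounded by $O(Y^2)$; in fact $c$ is the product of the two remaining roots, so $|c|<Y^2$. Thus the total count of reducible $f$ with $h(f)<Y$ (ignoring the index condition) is $O(Y^3)$, consistent with the claimed bound when $n=1$. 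The point is to gain a factor of $n^{1-\epsilon}$ from the condition $n\mid\ind(f)$.

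The key observation is that for $f(x)=(x-a)g(x)$ with $g$ monic quadratic and $n$ squarefree, one computes $\Delta(f)=g(a)^2\,\Delta(g)$. If a prime $p\mid n$ divides $\ind(f)$, then (by Lemma~\ref{lemnonmax}, or directly from the factorization) $p^2\mid\Delta(f)$; more usefully, $p$ forces a congruence condition tying $a$ to the reduction of $g$ modulo $p$. Concretely, writing things in terms of the roots: non-maximality at $p$ of $\Z[x]/f(x)$ with $f$ reducible means either two roots are congruent mod $p$ with extra divisibility, or the linear root is congruent to a root of $g$ mod $p$ with $p^2\mid g(a)$. In every case, once the quadratic $g$ is fixed, the value $a$ is constrained to lie in a union of arithmetic progressions modulo $p$ (indeed the residue $a\bmod p$ is determined up to $O(1)$ choices by $g\bmod p$, exactly as in Corollary~\ref{corA3}). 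Running over the squarefree $n$ via CRT, for each fixed $g$ the admissible $a$ in $[-Y,Y]$ with $n\mid\ind((x-a)g)$ number $O_\epsilon(n^\epsilon(Y/n+1))$. Summing over the $O(Y^2)$ choices of $g$ gives $O_\epsilon(n^\epsilon Y^2(Y/n+1))=O_\epsilon(Y^3 n^{\epsilon-1}+n^\epsilon Y^2)$. This is slightly weaker than the proposition in the second term ($Y^2$ instead of $Y$), so one must do better in the regime where $n$ is large compared to $Y$: there one instead fixes $a$ first. For fixed $a$ with $|a|<Y$, the divisibility $n\mid\ind((x-a)g)$ with $g(x)=x^2+(k+a)x+c$ becomes, for each $p\mid n$, a condition cutting down $c\bmod p$ (and $c\bmod p^2$ in the genuinely ramified case, but $n$ squarefree makes the relevant modulus $\le n^2$ with the count of residues $O(n^\epsilon)$), together with the discriminant window $|\Delta(g)|$ forced to be divisible by $n/\gcd$; since $|c|<Y^2$ this gives $O_\epsilon(n^\epsilon(Y^2/n+1))$ choices of $c$, and summing over $O(Y)$ choices of $a$ yields $O_\epsilon(Y^3n^{\epsilon-1}+n^\epsilon Y)$, as desired. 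Taking the better of the two bounds in each range (the second dominates when $n\gg Y$, and when $n\ll Y$ the $n^\epsilon Y^2$ term is absorbed into $Y^3/n^{1-\epsilon}$ up to adjusting $\epsilon$) gives the proposition.

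The main obstacle is the bookkeeping for the "$(1^3)$" and ramified-quadratic cases, where the index can in principle be controlled by $p^2$ rather than $p$ even though $n$ is squarefree — one must check that the relevant congruence modulus stays $\le n^2$ and that the number of residues modulo that modulus is $O_\epsilon(n^\epsilon)$, uniformly. This is exactly the content already extracted in Lemma~\ref{lemnonmax} and Corollary~\ref{corA3} for a single prime, so the work is to CRT it cleanly and to verify the divisor-function factors only contribute $n^\epsilon$. A secondary nuisance is handling the degenerate locus $\Delta(f)=0$ (repeated rational roots), but that locus is lower-dimensional and contributes $O(Y)$, which is within the stated bound. I would organize the write-up as: (i) reduce to $n=p^\ell$ prime (here $\ell=1$) by CRT and multiplicativity; (ii) parametrize reducible $f$ by $(a,g)$; (iii) prove the per-prime congruence constraint on the pair; (iv) do the two-regime count above; (v) reassemble.
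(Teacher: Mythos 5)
Your second count is essentially the paper's own proof: write $f(x)=(x-a)(x^2+(k+a)x+c)$ with $k\in\{-1,0,1\}$, note $|a|\ll Y$ and $|c|\ll Y^2$, fix $a$ and $k$, and show that $n\mid\ind(f)$ confines $c$ to $O_\epsilon(n^\epsilon)$ residue classes modulo $n$, giving $\ll_\epsilon n^\epsilon(Y^2/n+1)$ choices of $c$ and hence $\ll_\epsilon Y^3/n^{1-\epsilon}+n^\epsilon Y$ after summing over $a$; since this is valid for every $n$, your two-regime structure is unnecessary. The only real difference is how the mod-$n$ constraint is obtained: instead of the casework with Lemma \ref{lemnonmax} (ramified versus unramified), the paper simply uses that $n\mid\ind(f)$ forces $n^2\mid\Delta(f)$ and that, for fixed root and trace, $\Delta(f)$ is a nonzero cubic polynomial in the free coefficient with leading coefficient $4$, hence has $O(1)$ roots modulo each prime $p\mid n$ and $O_\epsilon(n^\epsilon)$ admissible residues modulo $n$ by multiplicativity; this avoids the splitting-type analysis entirely. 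One caution about your first regime: with the quadratic cofactor $g$ fixed, the residue of $a$ modulo $p$ is \emph{not} always constrained --- if $\Z[x]/g(x)$ is itself non-maximal at $p$ (e.g.\ $g(x)=x^2-p^2m$), then $(x-a)g(x)$ has index divisible by $p$ for \emph{every} $a$ --- so the claim that $a\bmod p$ is determined up to $O(1)$ choices by $g\bmod p$ fails on a thin set of $g$'s. Since that regime is redundant (the fixed-$a$ count already gives the stated bound for all $n$), this does not affect your conclusion, but it should be deleted or repaired in a final write-up.
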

\begin{proof}
  If $f\in V(\Z)$ is reducible, then there exists $r\in\Z$ such that
  $f(r)=0$. Hence we have
  \begin{equation*}
    f(x)=(x-r)(x^2+(r+k)x+b)=x^3+kx^2+(b-r(r+k))x-br
  \end{equation*}
  with $k\in\{-1,0,1\}$.  For such an element $f$ with $h(f)<Y$, it
  follows that $|r|\ll Y$ and $|b| \ll Y^2$. Note that for fixed $r$
  and $k$, the polynomial $\Delta_{r,k}(b):=\Delta(f)$ is a cubic
  polynomial in $b$ with leading coefficient $4$, and in particular,
  that it is nonzero.

  
  Let $f\in\Sigma_n$ be a polynomial with fixed $r$ and $k$. It
  follows that $n^2\mid\Delta(f)$ and that therefore the residue of
  $b$ modulo $n$ has $O(n^\epsilon)$ choices. The proposition now
  follows from the bounds on $|r|$ and $|b|$.
\end{proof}

\subsubsection*{Estimates and bounds on irreducible elements}

Let $V(\R)^\pm_{X,Y}$ denote the set of elements $f\in V(\R)^\pm$ such
that $|\Delta(f)|<X$ and $h(f)<Y$.  We start by estimating the number
of elements in $\Sigma_n$ with bounded height and discriminant, for
squarefree integers $n$.

\begin{theorem}\label{thcount}
Let $m$ be a positive integer and let $n$ be a positive squarefree
integer relatively prime to $m$. Let $\cL\subset V(\Z)$ be a set
defined by congruence conditions modulo $m$. Then
\begin{equation*}
  \displaystyle\#\{f\in\cL\cap\Sigma_n:0<\pm\Delta(f)<X,\;h(f)<Y\}=
  \displaystyle
  \nu(L)\nu(\Sigma_n)\Vol(V(\R)^\pm_{XY})
  +O\left(Y^3m/n^{1-\epsilon}+Y^2mn^{\epsilon}\right).
\end{equation*}
\end{theorem}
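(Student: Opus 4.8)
\textbf{Proof plan for Theorem \ref{thcount}.}

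The plan is to count lattice points in the region $R_{n}:=\{f\in V(\R)^\pm_{X,Y}\}$ subject to the congruence conditions cutting out $\cL\cap\Sigma_n$, using Proposition \ref{davlem}. First I would observe that, by the Chinese Remainder Theorem together with Proposition \ref{propnonmaxpp}(a), the set $\cL\cap\Sigma_n$ is defined by congruence conditions modulo $m n^2$, and moreover by Proposition \ref{propnonmaxpp}(b) the fibre $\Sigma_n(k,A)$ depends only on $k,A$ modulo $mn$. So I would fix the residue classes of $k$ and $A$ modulo $mn$, and for each such choice count the admissible $B$ in a box of length $\ll Y^2$ (since $h(f)<Y$ forces $|A|\ll Y^2$, $|B|\ll Y^3$, while the $B$-fibre is cut out modulo $mn^2$). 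Summing a geometric-of-numbers estimate over the $(k,A)$ residues, the main term assembles into $\nu(\cL)\nu(\Sigma_n)\Vol(V(\R)^\pm_{X,Y})$; the subtlety is controlling the accumulated error.

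The second step is the error analysis, which is where the real work lies. Applying Proposition \ref{davlem} directly to the full three-dimensional region with the modulus $mn^2$ scaling would give an error of order $\Vol(\bar R_n)$ times the number of sublattice translates, which is far too weak. Instead I would slice: fix $k\in\{-1,0,1\}$ and a residue class of $A$ mod $mn$, so that $A$ ranges over an interval of length $\ll Y^2$ in that class (hence $\ll Y^2/(mn)$ values), and for each such $A$ the set of valid $B$ modulo $mn^2$ is a union of $O((mn)^\epsilon)$ residue classes by Propositions \ref{propnonmaxpp} and \ref{corA3}-type counting. Counting $B$ in an interval of length $\ll Y^3$ in a fixed class mod $mn^2$ gives $Y^3/(mn^2) + O(1)$ points. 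Multiplying out: the main terms sum to the claimed volume (up to lower order), the $O(1)$ per $(A,B)$-class sums to $O\bigl((Y^2/(mn))\cdot (mn)^\epsilon\cdot m\bigr)=O(Y^2 m n^\epsilon)$ after also accounting for the $m$-many $(k,A)$ residue classes mod $m$ versus mod $mn$; and the discrepancy between $Y^3/(mn^2)$ summed over $\ll (mn)^\epsilon\cdot Y^2/(mn)$ classes and the true volume is governed by the boundary of the discriminant-and-height region, which by Proposition \ref{davlem} applied to the two-dimensional $(A,B)$-slice contributes $O(\max\{\Vol(\bar R),1\})$ per slice, i.e.\ $O(Y^3 m /n^{1-\epsilon})$ in total once scaled by the $(mn)^\epsilon$ class count and the $Y^2$ leading behaviour of the projection. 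I would also need to separately discard the reducible polynomials, which is exactly what Proposition \ref{propred3} provides: their count is $\ll Y^3/n^{1-\epsilon} + n^\epsilon Y$, absorbed into the stated error.

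The main obstacle I anticipate is bookkeeping the error term so that it genuinely comes out as $O(Y^3 m/n^{1-\epsilon} + Y^2 m n^\epsilon)$ rather than something with a worse power of $n$: the naive bound loses because $\nu(\Sigma_n)$ is only $\ll n^{-1+\epsilon}$ (Proposition \ref{propnonmaxpp}(c), taking $q=1$ since $n$ is squarefree), so the ``number of classes'' can be as large as $n^{1+\epsilon}$ and one cannot afford an $O(1)$ Davenport error for each. The resolution is that the $B$-fibre modulo $n^2$, though it can comprise up to $\ll n^{1+\epsilon}$ classes when summed over all $A$, comprises only $O(n^\epsilon)$ classes for each \emph{fixed} $A$ mod $n$ (Corollary \ref{corA3}, extended via Proposition \ref{propnonmaxpp}(b)); so one must do the $B$-count first, at fixed $(k,A)$, and only then sum over $A$. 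Getting the quantifiers in that order, and verifying that the $O(1)$ Davenport errors in the innermost ($B$) count sum correctly against the $Y^2/(mn)$ admissible $A$-values and the $(mn)^\epsilon$ fibre classes, is the crux; everything else is a routine volume computation and an application of Proposition \ref{propred3}.
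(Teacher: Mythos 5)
Your plan is essentially the paper's own proof: fiber over $k$ and the $x$-coefficient $A$, use Corollary \ref{corA3}/Proposition \ref{propnonmaxpp} to see that the allowed $B$-classes mod $mn^2$ depend only on $A$ mod $mn$ and number $O(mn^\epsilon)$ (density $\ll n^{-2+\epsilon}$), count $B$ first in each slice with $O(1)$ error per class, and then sum the weighted slice lengths over $A$ in progressions mod $mn$, exactly as in the paper's fibering argument. The only blemishes are cosmetic: the density of $\Sigma_n$ for squarefree $n$ is $\ll n^{-2+\epsilon}$, not $n^{-1+\epsilon}$ (this is what makes the $Y^3m/n^{1-\epsilon}$ term come out), and the appeal to Proposition \ref{propred3} is unnecessary since the theorem counts all of $\cL\cap\Sigma_n$, with irreducibility only entering later in the sieve.
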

\begin{proof}
Given $A\in\Z$, let $R_A$ denote the set of polynomials $f(x)\in
V(\R)^\pm_{XY}$ with $x$-coefficient equal to $A$. Let $\nu_{m,n,A}$
denote the density of the set of polynomials $f(x)\in L\cap\Sigma_n$
whose $x$-coefficient is $A$ within the set of polynomials $f\in
V(\Z)$ whose $x$-coefficient is $A$. From Corollary \ref{corA3} it
follows that $\nu_{m,n,A}$ depends only on the residue of $A$ modulo
$mn$, and that $\nu_{m,n,A}\ll 1/n^{2-\epsilon}$.  Fibering by $A$, we
obtain
\begin{equation}\label{eqcc1}
  \begin{array}{rcl}
  \displaystyle\#\{f\in\cL\cap\Sigma_n:|\Delta(f)|<X,\;h(f)<Y\}&=&
  \displaystyle \sum_{|A|\leq 3Y^2}\#\{R_A\cap L\cap\Sigma_n\}\\[.1in]
  &=& \displaystyle\sum_{|A|\leq 3Y^2}\nu_{m,n,A}|R_A|+O(Y^2mn^\epsilon),
  \end{array}
\end{equation}
where $|R_A|$ denotes the length of $R_A$. Note that we have $|R_A|\ll
Y^3$ from the height bound. Now the average value of $\nu_{m,n,A}$, as
$A$ varies over a complete residue system modulo $mn$, is clearly
equal to $\nu(L)\nu(\Sigma_n)$.

Consider the main term of the second line of \eqref{eqcc1}.  We break
up the sum over arithmetic progressions modulo $mn$. From Proposition
\ref{davlem}, we obtain

\begin{equation*}
\begin{array}{rcl}
 \displaystyle\sum_{|A|\leq 3Y^2}\nu_{m,n,A}|R_A|&=&
 \displaystyle\sum_{d\in\Z/(mn)}\nu_{m,n,d}
 \sum_{\substack{|A|\leq 3Y^2\\A\equiv d\;{\rm mod}\;{mn}}}
 |R_A|
 \\[.3in] &=&
 \displaystyle\sum_{d\in\Z/(mn)}\nu_{m,n,d}\cdot\Bigl(
 \frac{\Vol(V(\R)_{X,Y})}{mn} +
 O\bigl(Y^3\bigr)\Bigr)
 \\[.2in] &=&
 \displaystyle
 \nu_m(L)\nu_n(\Sigma_n)\Vol(V(\R)_{X,Y})+
 O\Bigl(\frac{mY^3}{n^{1-\epsilon}}\Bigr),
\end{array}
\end{equation*}
where we use the fact that $\nu_{m,n,A}\ll_\epsilon
1/n^{2-\epsilon}$. This concludes the proof of the theorem.
\end{proof}

Finally, we prove a bound on the number of polynomials $f$ such that
$\ind(f)$ is divisible by arbitirary positive integers $n$.

\begin{theorem}\label{thbound}
  Let $n$ be a positive integer. We have
  \begin{equation*}
    \#\{f\in\Sigma_n:|\Delta(f)|<X,\;h(f)<Y\}\ll \nu(\Sigma_n)\Bigl(Y^2+n\Bigr)
    \min\{Y^3,X^{\frac12}\}+nY^2.
  \end{equation*}
\end{theorem}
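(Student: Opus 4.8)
The plan is to bound $\#\{f\in\Sigma_n:|\Delta(f)|<X,\;h(f)<Y\}$ by fibering over the linear coefficient $A$, exactly as in the proof of Theorem~\ref{thcount}, but now being careful to extract both the $\min\{Y^3,X^{1/2}\}$ factor and to handle general (not necessarily squarefree) $n$. First I would write $n=q^3m$ with $m$ cube-free, so that Proposition~\ref{propnonmaxpp}(c) gives the volume bound $\nu(\Sigma_n)\ll_\epsilon n^\epsilon/(q^5m^2)$ and Proposition~\ref{propnonmaxpp}(b) tells us that the density $\nu_{n,A}$ of $\Sigma_n(k,A)\subset\Z_p^{\,}$-fibres depends only on $k,A\bmod n$. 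Fibering over $A$ with $|A|\leq 3Y^2$ (a range forced by $h(f)<Y$), the count becomes $\sum_{|A|\leq 3Y^2}\nu_{n,A}\,|R_A|+O(Y^2 n^\epsilon\cdot(\text{number of }A))$, where $R_A$ is the slice of $V(\R)^\pm_{X,Y}$ with fixed $A$-coordinate; the $O$-term here accounts for the fact that $\Sigma_n$ is only cut out modulo $n^2$ by Proposition~\ref{propnonmaxpp}(a), contributing $O(nY^2)$ total — this is the last term in the claimed bound.

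For the main term I would bound $|R_A|$ two ways: trivially $|R_A|\ll Y^3$ from the height, and also $|R_A|\ll X^{1/2}$ since on the slice $\{A=\text{const}\}$ the discriminant $\Delta(f)$ is (up to a $\pm$ and lower-order terms) a cubic in $B$ with leading coefficient a nonzero constant, so the condition $|\Delta(f)|<\delta X$ confines $B$ to a set of measure $O(X^{1/2})$ — this is precisely the kind of estimate already used for $\Delta_{r,k}(b)$ in the proof of Proposition~\ref{propred3}, and it is what produces the $\min\{Y^3,X^{1/2}\}$ factor. Thus $|R_A|\ll\min\{Y^3,X^{1/2}\}$ uniformly in $A$. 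Summing, $\sum_{|A|\leq 3Y^2}\nu_{n,A}|R_A|\ll\min\{Y^3,X^{1/2}\}\sum_{|A|\leq 3Y^2}\nu_{n,A}$. Breaking the $A$-sum into residue classes mod $n$ and using that the average of $\nu_{n,A}$ over a complete residue system mod $n$ equals $\nu(\Sigma_n)$, while each individual $\nu_{n,A}\ll_\epsilon n^\epsilon/n^{2}$ (again from Proposition~\ref{propnonmaxpp}(c), or from Corollary~\ref{corA3} for the squarefree part), one gets $\sum_{|A|\leq 3Y^2}\nu_{n,A}\ll\nu(\Sigma_n)(Y^2/n)\cdot n+\nu(\Sigma_n)\cdot n = \nu(\Sigma_n)(Y^2+n)$; the first term is the genuine average over the $O(Y^2/n)$ complete blocks of length $n$, and the $+n$ absorbs the at most one incomplete block, on which each of the $\ll n$ classes contributes $\ll\nu(\Sigma_n)$ after using $n\cdot n^\epsilon/n^2\asymp n^\epsilon$. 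Combining gives the main term $\nu(\Sigma_n)(Y^2+n)\min\{Y^3,X^{1/2}\}$, and the error terms collapse into the $+nY^2$, completing the proof.

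The step I expect to be the main obstacle is getting the bookkeeping of the incomplete residue block right so that it truly produces only a $+n$ (rather than something larger) inside the factor $(Y^2+n)$: one must argue that on the $O(1)$ leftover partial block the total contribution of the $\ll n$ residue classes, each weighted by $\nu_{n,A}\ll_\epsilon n^{\epsilon-2}$ and by $|R_A|\ll\min\{Y^3,X^{1/2}\}$, is $\ll n^{\epsilon-1}\min\{Y^3,X^{1/2}\}\ll\nu(\Sigma_n)\cdot n\cdot\min\{Y^3,X^{1/2}\}$, which requires comparing $n^{\epsilon-1}$ with $n\cdot\nu(\Sigma_n)\asymp n^{1+\epsilon}/(q^5m^2)$ — valid since $q^5m^2\le n^2$. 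Everything else is a routine combination of Davenport's lemma (Proposition~\ref{davlem}) — or rather just its volume main term, since here we only need an upper bound — with the slice estimate for $|R_A|$.
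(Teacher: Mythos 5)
Your overall strategy is the paper's: fiber over $k\in\{-1,0,1\}$ and the $x$-coefficient $A$, bound each slice by $|R_A|\ll\min\{Y^3,X^{1/2}\}$, take a congruence error of size $O(n)$ per fiber (giving the $nY^2$ term), and sum the fiber densities over $|A|\le 3Y^2$ using periodicity modulo $n$ and the fact that the average of $\nu(k,A;\Sigma_n(k,A))$ over $k$ and $A\bmod n$ is $\nu(\Sigma_n)$, yielding the factor $(Y^2+n)\nu(\Sigma_n)$. Two small slips do not affect this: the discriminant of $x^3+kx^2+Ax+B$ is a \emph{quadratic} in $B$ with leading coefficient $-27$ (not a cubic, which is the $\Delta_{r,k}(b)$ computation in Proposition \ref{propred3}); either way one gets $|R_A|\ll X^{1/2}$. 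And your displayed fibering error ``$O(Y^2 n^\epsilon\cdot(\text{number of }A))$'' is garbled, but the total $O(nY^2)$ you assert is what the paper uses.

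The one genuinely problematic step is your treatment of the incomplete residue block, which you single out as the main obstacle. You invoke a pointwise bound $\nu(k,A;\Sigma_n(k,A))\ll_\epsilon n^{\epsilon-2}$ ``from Proposition \ref{propnonmaxpp}(c)''. But part (c) bounds the \emph{total} density $\nu(\Sigma_n)$, i.e.\ an average over $(k,A)$; it says nothing about individual fibers, and the pointwise bound is false for non-squarefree $n$. For example, with $n=p^3$ and $(k,A)$ such that after the shift $x\mapsto x+r$ one has $p\mid k'$ and $p^2\mid A'$, the set $\Sigma_{p^3}^{(2)}(k,A)$ is a full residue class of $B$ modulo $p^3$, so the fiber density is $\asymp 1/n$, far larger than $n^{\epsilon-2}$ (Corollary \ref{corA3} gives your bound only for squarefree $n$). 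Your comparison also quietly uses a lower bound $\nu(\Sigma_n)\gg n^{\epsilon}/(q^5m^2)$, which Proposition \ref{propnonmaxpp}(c) does not provide. Fortunately the obstacle is illusory: since all fiber densities are nonnegative and depend on $A$ only modulo $n$, the sum over the single incomplete block is at most the sum over one complete period, which is $n$ times the average, i.e.\ $\ll n\,\nu(\Sigma_n)$; this is exactly how the paper's bound $\sum_{|A|\le 3Y^2}\nu(k,A;\Sigma_n(k,A))\ll(Y^2+n)\nu(\Sigma_n)$ is obtained, with no pointwise information needed. With that replacement your argument coincides with the paper's proof.
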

\begin{proof}
Let $R_A$ be defined as in the proof Theorem \ref{thcount}, and note
that $|R_A|\ll\min(Y^3,X^{\frac12})$. Fibering over $k$ and $A$, we
obtain
\begin{align*}
\{f\in\Sigma_n:|\Delta(f)|<X,\;h(f)<Y\}&\ll \sum_{k\in\{-1,0,1\}}\sum_{A<3Y^2}
\bigl(\nu(k,A;\Sigma_n(k,A))\cdot|R_A|+O(n)\bigr)\\
&\ll nY^2+\sum_{k\in\{-1,0,1\}}\sum_{A<3Y^2} \nu(k,A;\Sigma_n(k,A))(\min\{Y^3,X^{\frac12}\})\\
&\ll nY^2+(Y^2+n)\min\{Y^3,X^{\frac12}\}\Avg(\nu(k,A;\Sigma_n(k,A))),
\end{align*}
where the average is over $k\in\{-1,0,1\}$ and $A$ modulo $n$. Since
this average is equal to $\nu(\Sigma_n)$, the theorem follows.
\end{proof}


 

\subsection{An upper bound}
We fix a constant $C>1$ such that for every cubic field $K$, the set
$S_K(CX^{1/4})$ is nonempty. Let $X$ and $Y$ be positive real numbers
such that $Y\geq CX^{1/4}$. Our goal in this section is to prove an
upper bound for number of cubic fields $K$ with discriminant bounded
by $X$, where each field $K$ is counted with weight $|S_K(Y)|$. We
start with the following important lemma:
\begin{lemma}\label{lemnumel}
Let $X$ and $Y$ be as above. Let $K$ be a cubic field such that
$X/2\leq|\Delta(K)|\leq X$. Then we have
\begin{equation*}
\#S_K(Y)/\#S_K(CX^{1/4})\ll Y^2/X^{1/2},
\end{equation*}
where the implied constant is independent of $X$, $Y$, and $K$.
\end{lemma}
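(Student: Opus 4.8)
The plan is to work purely with the lattice $\cO_K$ (more precisely the rank-$2$ sublattice $\cO_K^{\tr=0}$ of trace-zero elements, after the reduction step) inside $K_\infty^{\tr=0}\cong\R^2$, and to compare the count of lattice points in the ball of radius $Y$ with the count in the ball of radius $CX^{1/4}$. First I would recall that a reduced element $\alpha\in\cO_K\setminus\Z$ of height $<Y$ differs from a trace-zero element of $\cO_K$ of height $O(Y)$ by an integer, so $\#S_K(Y)$ is comparable to the number of nonzero vectors of the lattice $\Lambda:=\cO_K^{\tr=0}$ lying in the region $\{|\alpha|_\infty<Y\}$, a fixed convex symmetric body scaled by $Y$. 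The covolume of $\Lambda$ is $\asymp|\Delta(K)|^{1/2}\asymp X^{1/2}$.

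Next I would invoke Minkowski's successive minima: let $\lambda_1\le\lambda_2$ be the successive minima of $\Lambda$ with respect to this body, so $\lambda_1\lambda_2\asymp X^{1/2}$. Since $S_K(CX^{1/4})$ is nonempty by the choice of $C$, and every nonzero element of $\Lambda$ in the $CX^{1/4}$-ball has $\lambda_1\le CX^{1/4}$, we get $\lambda_1\ll X^{1/4}$, hence $\lambda_2\gg X^{1/4}$. A standard counting estimate (Lemma of the type "number of lattice points in a convex body of radius $R$ is $\ll \prod_i(1+R/\lambda_i)$") gives
\begin{equation*}
\#S_K(Y)\ll \Bigl(1+\frac{Y}{\lambda_1}\Bigr)\Bigl(1+\frac{Y}{\lambda_2}\Bigr),\qquad
\#S_K(CX^{1/4})\gg 1+\frac{X^{1/4}}{\lambda_1}.
\end{equation*}
Since $Y\ge CX^{1/4}\gg\lambda_1^{\phantom{1}}$ is not guaranteed, I would split into the two cases $\lambda_1\le X^{1/4}$ (the main case, where $\#S_K(CX^{1/4})\asymp X^{1/4}/\lambda_1$) and the degenerate range; in the main case, using $\lambda_2\gg X^{1/4}\ge\lambda_1$ and $\lambda_1\lambda_2\asymp X^{1/2}$ one computes
\begin{equation*}
\frac{\#S_K(Y)}{\#S_K(CX^{1/4})}\ll \frac{(1+Y/\lambda_1)(1+Y/\lambda_2)}{X^{1/4}/\lambda_1}
\ll \frac{Y/\lambda_1\cdot(1+Y/\lambda_2)}{X^{1/4}/\lambda_1}
\ll \frac{Y}{X^{1/4}}\cdot\frac{Y}{\lambda_2}\cdot\frac{1}{X^{1/4}/Y}\ll \frac{Y^2}{X^{1/2}},
\end{equation*}
where in the last step I use $\lambda_2\gg X^{1/4}$ and $Y\ge CX^{1/4}$ so that the $+1$ terms are absorbed. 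One checks the estimate is independent of $X,Y,K$ because all implied constants come only from the fixed body $\{|\alpha|_\infty<1\}$, Minkowski's theorems, and the constant $C$.

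The main obstacle, and the one step requiring genuine care, is justifying the upper bound $\#S_K(Y)\ll(1+Y/\lambda_1)(1+Y/\lambda_2)$ uniformly and then bookkeeping the boundary/degenerate cases so that the $O$-constant is genuinely uniform. In particular one must be careful that when $\lambda_1$ is very small (a highly skewed $\cO_K$) the number of short vectors along the first minimum direction is large, but this is exactly matched by the denominator $\#S_K(CX^{1/4})$; the ratio estimate is then driven by the second factor $Y/\lambda_2$, which is controlled because $\lambda_2\gg X^{1/4}$ forces $Y/\lambda_2\ll Y/X^{1/4}$, giving the claimed $Y^2/X^{1/2}$. The reduction from $S_K(Y)$ to lattice points of $\Lambda$, and the passage between the $\max|\alpha|_v$ norm and a genuine convex body, are routine and I would dispatch them in a sentence each.
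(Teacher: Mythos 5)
Your proposal is correct and takes essentially the same route as the paper, which phrases the successive-minima argument via a Minkowski basis $\langle 1,\alpha,\beta\rangle$ of $\cO_K$ with $|\alpha|_\infty\ll X^{1/4}$ (forced by the choice of $C$) and $|\beta|_\infty\gg X^{1/4}$ (forced by covolume $\asymp X^{1/2}$), then compares $\#S_K(Y)\asymp (Y/|\alpha|_\infty)\max\{Y/|\beta|_\infty,1\}$ with $\#S_K(CX^{1/4})$ exactly as you do. The only blemish is that the middle display in your chain is garbled; the clean last step is $Y(1+Y/\lambda_2)/X^{1/4}\ll (Y/X^{1/4})\cdot(Y/X^{1/4})=Y^2/X^{1/2}$, using $\lambda_2\gg X^{1/4}$ and $Y\geq CX^{1/4}$, which is what your closing prose in fact says.
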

\begin{proof}
  We start by picking a Minkowski basis $\langle1,\alpha,\beta\rangle$
  for $\cO_K$. Let $\delta_1$ and $\delta_2$ be such that
  $|\alpha|_\infty=X^{\delta_1}$ and
  $|\beta|_\infty=X^{\delta_2}$. Assume without loss of generality
  that $\delta_1\leq\delta_2$. From our assumption on $C$, it follows
  that $|\alpha|_\infty<CX^{1/4}$. We have
\begin{equation*}
\begin{array}{rcl}
\displaystyle\#S_K(Y)&\asymp&
\displaystyle\frac{Y}{X^{\delta_1}}\cdot
\max\Bigl\{\frac{Y}{X^{\delta_2}},1\Bigr\};\\[.2in]
\displaystyle\#S_K(CX^{1/4})&\asymp&
\displaystyle\frac{CX^{1/4}}{X^{\delta_1}}\cdot
\max\Bigl\{\frac{CX^{1/4}}{X^{\delta_2}},1\Bigr\}.
\end{array}
\end{equation*}
The proof now follows from the fact that $X^{\delta_2}\gg X^{1/4}$.
\end{proof}

We now prove the following crucial upper bound:
\begin{thm}\label{thupbound}
  Let $X$ and $Y$ be as above. Then
  \begin{equation*}
    \sum_{\substack{[K:\Q]=3\\ |\Delta(K)|\leq X}} |S_K(Y)|\ll_{\epsilon} X^{1/2+\epsilon}Y^2,
  \end{equation*}
  where the implied constant only depends on $C$.
\end{thm}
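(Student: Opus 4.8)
The plan is to reduce first, via Lemma~\ref{lemnumel}, to the case where the height parameter is of the smallest possible size $\asymp|\Delta(K)|^{1/4}$, then to translate the resulting count into a count of integral monic cubic polynomials via Lemma~\ref{lembij}, and finally to estimate it using Theorem~\ref{thbound}. The structural point is that once the height of $\alpha$ is forced down to $O(|\Delta(K)|^{1/4})$, the index of the minimal polynomial of $\alpha$ is forced to be $O(|\Delta(K)|^{1/4})$ as well, so the sum over $n=\ind(f)$ in Theorem~\ref{thbound} is short and its secondary term does not accumulate.

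First I would dyadically decompose the fields by discriminant: set $X_j=2^{-j}X$ and split the sum according to $X_j/2<|\Delta(K)|\le X_j$. For $K$ in the $j$-th block we have $Y\ge CX^{1/4}\ge CX_j^{1/4}$, so Lemma~\ref{lemnumel} gives $\#S_K(Y)\ll (Y^2/X_j^{1/2})\#S_K(CX_j^{1/4})$. Summing over blocks,
\begin{equation*}
\sum_{\substack{[K:\Q]=3\\ |\Delta(K)|\le X}}|S_K(Y)| \ll Y^2\sum_j\frac{1}{X_j^{1/2}}\sum_{\substack{[K:\Q]=3\\ X_j/2<|\Delta(K)|\le X_j}}\#S_K(CX_j^{1/4}),
\end{equation*}
and since $\sum_j X_j^{1/2+\epsilon}\ll X^{1/2+\epsilon}$, it suffices to show the inner sum is $\ll_\epsilon X_j^{1+\epsilon}$ for each $j$.

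By Lemma~\ref{lembij}, restricted to the discriminant window $(X_j/2,X_j]$, the inner sum equals the number of irreducible $f\in V(\Z)$ with $h(f)<CX_j^{1/4}$ and $|\Delta(f)|/\ind(f)^2\in(X_j/2,X_j]$. For such an $f$, put $n=\ind(f)$: then $f\in\Sigma_n$, $|\Delta(f)|\le n^2X_j$, and since $|\Delta(f)|\ll h(f)^6\ll X_j^{3/2}$ while $|\Delta(f)|>n^2X_j/2$ we get $n\ll X_j^{1/4}$. Counting each $f$ in the term $n=\ind(f)$ yields
\begin{equation*}
\sum_{\substack{[K:\Q]=3\\ X_j/2<|\Delta(K)|\le X_j}}\#S_K(CX_j^{1/4})\le\sum_{1\le n\ll X_j^{1/4}}\#\bigl\{f\in\Sigma_n : h(f)<CX_j^{1/4},\ |\Delta(f)|\le n^2X_j\bigr\}.
\end{equation*}
Applying Theorem~\ref{thbound} with discriminant bound $n^2X_j$ and height bound $CX_j^{1/4}$, and using that $n\ll X_j^{1/4}$ makes $(CX_j^{1/4})^2+n\ll X_j^{1/2}$ and $\min\{(CX_j^{1/4})^3,(n^2X_j)^{1/2}\}\le nX_j^{1/2}$, each summand is $\ll\nu(\Sigma_n)\,nX_j+nX_j^{1/2}$. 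Finally, Proposition~\ref{propnonmaxpp}(c) gives $\nu(\Sigma_n)n\ll_\epsilon n^\epsilon/(q^2m)$ when $n=q^3m$ with $m$ cube-free, whence $\sum_{n\le N}\nu(\Sigma_n)n\ll_\epsilon N^\epsilon$; together with $\sum_{n\ll X_j^{1/4}}n\ll X_j^{1/2}$ this gives
\begin{equation*}
\sum_{1\le n\ll X_j^{1/4}}\bigl(\nu(\Sigma_n)\,nX_j+nX_j^{1/2}\bigr)\ll_\epsilon X_j^{1+\epsilon}+X_j\ll_\epsilon X_j^{1+\epsilon},
\end{equation*}
which is the required bound.

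The step I expect to be the real obstacle is the initial reduction: a naive application of Theorem~\ref{thbound} to the original sum fails badly, since the minimal polynomial of an element of height $Y$ can have index up to $\asymp Y^3$, and the secondary term $nY^2$ of Theorem~\ref{thbound}, summed over $n\le Y^3$, is of size $\asymp Y^8$, far larger than the target $X^{1/2+\epsilon}Y^2$. It is precisely the preliminary invocation of Lemma~\ref{lemnumel} that replaces $Y$ by $CX_j^{1/4}$, compressing the index range to $n\ll X_j^{1/4}$ and making that error term negligible; the remaining work is routine bookkeeping with the density estimates of \S3.1.
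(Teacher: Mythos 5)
Your proposal is correct and follows essentially the same route as the paper: a dyadic decomposition in the discriminant, Lemma~\ref{lemnumel} to reduce the height to $\asymp X_j^{1/4}$, the bijection \eqref{eqbijection} with the index bound $n\ll X_j^{1/4}$, and then Theorem~\ref{thbound} combined with Proposition~\ref{propnonmaxpp} to control $\sum_n \nu(\Sigma_n)n$. The only difference is cosmetic bookkeeping (you factor out $Y^2/X_j^{1/2}$ before counting polynomials, whereas the paper carries it through), so nothing further is needed.
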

\begin{proof}
We start by counting cubic fields whose discriminants are in a dyadic
range of $M<X$. If $K$ is such a field and if $\alpha\in
S_K(CM^{1/4})$, then any polynomial $f$ corresponding to $(K,\alpha)$
under Lemma \ref{lembij} must satisfy $\ind(f)\ll M^{1/4}$ (since
$\Delta(f)\ll M^{3/2}$). From Lemma \ref{lemnumel} and
\eqref{eqbijection}, we obtain
\begin{equation*}
  \begin{array}{rcl}
    \displaystyle\sum_{\substack{[K:\Q]=3\\ M/2\leq |\Delta(K)|\leq M}} |S_K(Y)|&\ll&
    \displaystyle\frac{Y^2}{M^{1/2}} \sum_{\substack{[K:\Q]=3\\ M/2\leq
        |\Delta(K)|\leq M}} |S_K(CM^{1/4})|\\[.25in]
    &\ll&\displaystyle \frac{Y^2}{M^{1/2}}  \sum_{n\ll M^{1/4}}
    \#\{f\in \Sigma_n\,:\,h(f)<CM^{1/4},|\Delta(f)|<n^2M\}\\[.25in]
 &\ll&\displaystyle \frac{Y^2}{M^{1/2}}\displaystyle\sum_{n\ll M^{1/4}}\Bigl( nM^{1/2} + \nu(n)M^{1/2}\min\{M^{3/4},nM^{\frac12}\}\Bigr)\\[.2in]
 &\ll&Y^2M^{1/2} +Y^2M^{1/2}\displaystyle\sum_{n\ll M^{1/4}} \nu(n)n\\[.2in]
 &\ll&M^{1/2+\epsilon}Y^2,
  \end{array}
\end{equation*}
where the third estimate follows from Theorem \ref{thbound}, and the
last estimate follows from Proposition \ref{propnonmaxpp}.  Summing
$M<X$ over powers of $2$ yields the theorem.
\end{proof}

\subsection{The sieve}
Let $\kappa$, $\delta_1$, and $\delta_2$ be positive real numbers to
be chosen later. Fix $C$ as in the previous subsection. Throughout
this section, we set $Y=CX^{1/4+\kappa}$. We apply the inclusion
exclusion sieve to obtain
\begin{equation}\label{eqsm}
\begin{array}{rcl}
\displaystyle\sum_{\substack{[K:\Q=3]\\|\Delta(K)|\leq X}}|S_K(Y)|
&=&\displaystyle\sum_{n\geq 1}
\#\{f\in V(\Z)^\irr\,:\,\ind(f)=n,h(f)<Y,|\Delta(f)|<n^2X\}\\[.25in]
&=&\displaystyle\sum_{n\geq 1}\sum_{d\geq 1}\mu(d)
\#\{f\in \Sigma_{dn}^\irr\,:\,h(f)<Y,|\Delta(f)|<n^2X\}.
\end{array}
\end{equation}
The next result bounds the tail of the above sum.

\begin{lemma}\label{lemtailestimate}
We have
\begin{equation*}
\sum_{\substack{n,d\geq 1\\nd> X^{1/4+\delta_1}}}
\#\{f\in \Sigma_{dn}^\irr\,:\,h(f)<Y\}\ll_\epsilon
X^{1+6\kappa-2\delta_1+\epsilon}.
\end{equation*}
\end{lemma}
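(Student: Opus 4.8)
The plan is to bound the sum by grouping terms according to the value $N = nd$, and then, within each such group, summing over factorizations $N = nd$. For a fixed $N$, the number of divisors $d \mid N$ contributing is $O_\epsilon(N^\epsilon)$, so it suffices to bound $\#\{f \in \Sigma_N^\irr : h(f) < Y\}$ for each $N > X^{1/4+\delta_1}$ and sum over $N$. Here I would use Theorem \ref{thbound} with the discriminant bound taken to be vacuous (or, more efficiently, with $X$ replaced by the trivial height-driven bound $|\Delta(f)| \ll Y^6$); since $h(f) < Y$ forces $|R_A| \ll Y^3$ and $|\Delta(f)| \ll Y^6$, Theorem \ref{thbound} gives
\begin{equation*}
\#\{f \in \Sigma_N^\irr : h(f) < Y\} \ll \nu(\Sigma_N)(Y^2 + N)Y^3 + NY^2.
\end{equation*}

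Next I would insert the bound $\nu(\Sigma_N) \ll_\epsilon N^{-1+\epsilon}$ coming from Proposition \ref{propnonmaxpp}(c) (writing $N = q^3 m$ with $m$ cube-free, one has $q^5 m^2 \geq N^{5/3}\geq N$, so certainly $\nu(\Sigma_N)\ll_\epsilon N^{\epsilon-1}$). Thus each term is $\ll_\epsilon N^\epsilon(Y^5/N + Y^3) + NY^2$. Summing over $N$ in the dyadic range $X^{1/4+\delta_1} < N \leq N_{\max}$, where $N_{\max} \ll Y^3/X^{1/2} \ll X^{1/4+3\kappa}$ is the largest possible index of a polynomial of height $< Y$ (since $\ind(f)^2 \ll |\Delta(f)|/1 \ll Y^6$ forces... wait, more carefully $\ind(f) \ll Y^{3}/\min\{|\Delta(K)|^{1/2}\}$, but $|\Delta(K)|\geq 1$, giving $\ind(f) \ll Y^3$). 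I would therefore just sum $N$ up to $N_{\max}\ll Y^3$, but weighting by the reducible/irreducible split; the dominant contribution is from the $Y^5/N$ term summed over $X^{1/4+\delta_1} < N$, which telescopes to $\ll_\epsilon Y^5 \cdot X^{-1/4-\delta_1+\epsilon} \ll X^{5/4+5\kappa - 1/4 - \delta_1 + \epsilon} = X^{1 + 5\kappa - \delta_1 + \epsilon}$, and from $\sum_N N^{\epsilon}(Y^3 + NY^2)$ up to $N_{\max}$, which is $\ll N_{\max}^{1+\epsilon}(Y^3 + N_{\max}Y^2) \ll X^{(1/4+3\kappa)(1+\epsilon)}(X^{3/4+3\kappa} + X^{1/2+6\kappa}) \ll X^{1 + 6\kappa + \epsilon}$. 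Comparing exponents, the bound $X^{1 + 6\kappa - 2\delta_1 + \epsilon}$ claimed in the statement should come out provided $\delta_1$ is chosen small relative to $\kappa$; I would reconcile the precise exponent bookkeeping here, possibly using the sharper form of Theorem \ref{thbound} with $\min\{Y^3, X^{1/2}\}$ and splitting the $N$-sum at $N \sim Y^3/X^{1/2}$ to squeeze out the extra factor $X^{-\delta_1}$.

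The main obstacle I anticipate is the exponent bookkeeping in the last step: getting the claimed $-2\delta_1$ (rather than merely $-\delta_1$) in the exponent seems to require being careful about the interplay between the $Y^5/N$ savings and the restriction $N > X^{1/4+\delta_1}$, and in particular using that for $f \in \Sigma_N^\irr$ the associated field has $|\Delta(K)| = |\Delta(f)|/N^2$, so the discriminant constraint $|\Delta(f)| < n^2 X$ in the original sum (which I dropped) actually forces extra concentration. I would reintroduce that discriminant cutoff: for the term with $\Sigma_{dn}^\irr$ we have $|\Delta(f)| < n^2 X \leq N^2 X$, so Theorem \ref{thbound} may be applied with this sharper $X$-parameter, replacing $\min\{Y^3, (N^2X)^{1/2}\} = \min\{Y^3, NX^{1/2}\}$, and since $N > X^{1/4+\delta_1}$ while $Y^3 = C^3 X^{3/4+3\kappa}$, for $N$ in the relevant range the minimum is often $NX^{1/2}$, producing an extra factor $N/Y^{3} \cdot X^{1/2}$-type gain that, combined with $\nu(\Sigma_N) \ll N^{\epsilon-1}$, should yield the second factor of $X^{-\delta_1}$. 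Wrapping up is then just summing the resulting geometric series in $N$.
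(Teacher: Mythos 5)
Your route --- grouping by $N=nd$ and summing the polynomial-count bound of Theorem \ref{thbound} directly --- is genuinely different from the paper's, but as written it breaks down at several concrete points. First, the pointwise bound $\nu(\Sigma_N)\ll_\epsilon N^{\epsilon-1}$ does not give the savings you claim: $\sum_{N>X^{1/4+\delta_1}}N^{\epsilon-1}Y^5$ is of size $X^\epsilon Y^5\approx X^{5/4+5\kappa+\epsilon}$, not $Y^5X^{-1/4-\delta_1+\epsilon}$; to get the latter you must keep the full shape $\nu(\Sigma_N)\ll_\epsilon N^\epsilon/(q^5m^2)$ from Proposition \ref{propnonmaxpp} and use that $\sum_{N>T}\nu(\Sigma_N)\ll_\epsilon T^{-1+\epsilon}$ (the saving comes from the $m^{-2}$ decay, not from $N^{-1}$). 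Second, and more seriously, the statement you are proving has \emph{no} discriminant condition, so the only admissible cap on $N$ is $N\leq\ind(f)\ll Y^3$ (from $N^2\le|\Delta(f)|\ll Y^6$ and $|\Delta(K)|\gg1$); the cap $N\ll Y^3/X^{1/2}=X^{1/4+3\kappa}$ used in your displayed computation presupposes $|\Delta(K)|\gg X$, which is exactly what fails in the tail. With the correct range $N\ll Y^3$, the secondary terms of Theorem \ref{thbound} are fatal: $\sum_{N\ll Y^3}NY^2\approx Y^8=X^{2+8\kappa}$ and $\sum_{N\ll Y^3}N^\epsilon Y^3\approx Y^6=X^{3/2+6\kappa}$. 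Your proposed repair --- reintroducing $|\Delta(f)|<n^2X$ --- changes the statement; the lemma is stated, and is needed in the sieve, with the height condition alone. Finally, your reconciliation ``provided $\delta_1$ is small relative to $\kappa$'' is incompatible with the intended application, where one must take $\delta_1>3\kappa$ so that $X^{1+6\kappa-2\delta_1}=o(X)$.

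The idea you are missing is to go back to the field side rather than push Theorem \ref{thbound} into the large-$N$ regime. If $h(f)<Y$ and $nd\mid\ind(f)$ with $nd>X^{1/4+\delta_1}$, then $K=\Q[x]/f(x)$ satisfies $|\Delta(K)|=|\Delta(f)|/\ind(f)^2\ll Y^6/X^{1/2+2\delta_1}=X^{1+6\kappa-2\delta_1}$. Absorbing the multiplicity with which each $f$ occurs in the double sum (a divisor-type weight, $\ll_\epsilon X^\epsilon$), the tail is bounded by $X^\epsilon\sum_K\#S_K(Y)$, the sum running over cubic fields with $|\Delta(K)|\ll X^{1+6\kappa-2\delta_1}$, and the already-proved Theorem \ref{thupbound} (applicable since $Y\geq C\bigl(X^{1+6\kappa-2\delta_1}\bigr)^{1/4}$ once $\delta_1\geq\kappa$) finishes the proof in one line, sidestepping all the large-$N$ error terms. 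As a side remark relevant to your ``exponent bookkeeping'' worry: what Theorem \ref{thupbound} literally yields here is $\ll_\epsilon X^{1+5\kappa-\delta_1+\epsilon}$, which differs from the displayed $X^{1+6\kappa-2\delta_1+\epsilon}$ when $\delta_1>\kappa$; either bound is $o(X)$ for admissible choices of $\kappa,\delta_1$, so this does not affect Theorem \ref{theorem3}, but chasing the $-2\delta_1$ exponent through a purely polynomial-side argument is not the point.
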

\begin{proof}
Let $f\in V(\Z)^\irr$ be such that $\ind(f)>X^{1/4+\delta_1}$ and
$h(f)<Y$. Denote $\Q[x]/f(x)$ by $K$. Then
\begin{equation*}
|\Delta(K)|\ll \frac{Y^6}{\ind(f)^2}\ll X^{1+6\kappa-2\delta_1}.
\end{equation*}
Therefore, denoting the number of divisors of $m$ by $\sigma_0(m)$, we have
\begin{equation}\label{eqse1}
\begin{array}{rcl}
\displaystyle\sum_{\substack{n,d\geq 1\\nd> X^{1/4+\delta_1}}}
\#\{f\in \Sigma_{dn}^\irr\,:\,h(f)<Y\}
&\ll&
\displaystyle \sum_{m>X^{1/4+\delta_1}}\sigma_0(m)^2
\#\{f\in \Sigma_m^\irr\,:\,h(f)<Y)\\[.2in]
&\ll&
\displaystyle\sum_{\substack{[K:\Q]=3\\|\Delta(K)|\ll X^{1+6\kappa-2\delta_1}}}\#S_K(Y)\\[.35in]
&\ll_\epsilon&X^{1-2\delta_1+6\kappa+\epsilon},
\end{array}
\end{equation}
where the final estimate follows from Theorem \ref{thupbound}.
\end{proof}

For an integer $m$, let $\sq(m)$ denote the product of the prime
powers dividing $m$ to exponent at least~2. Next, we bound the sum
over the terms in the second line of \eqref{eqsm}, where $\sq(dn)$ is
large.

\begin{lemma}\label{lemsmsq}
We have
\begin{equation*}
\sum_{\substack{n,d\geq 1\\nd\leq X^{1/4+\delta_1}\\{\rm sq}(dn)>X^{\delta_2}}}
\#\{f\in \Sigma_{dn}^\irr\,:\,h(f)<Y,|\Delta(f)|<n^2X\}
\ll_\epsilon
X^{1+2\kappa+2\delta_1-\delta_2/2+\epsilon} + X^{1+2\kappa-\delta_2/9+\epsilon}.
\end{equation*}
\end{lemma}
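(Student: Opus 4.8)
The plan is to bound the sum by splitting the integers $n,d$ according to the size of $q:=\sqrt{\sq(dn)}$ (the integer whose square is the squarefull part of $dn$), and to use Proposition~\ref{propnonmaxpp}(c) together with the counting Theorem~\ref{thbound}. First I would write $m=dn$, noting that each $m$ arises with multiplicity $\sigma_0(m)\ll_\epsilon m^\epsilon \ll X^\epsilon$ from the pairs $(n,d)$ with $nd=m$, so it suffices to bound $\sum_{m\le X^{1/4+\delta_1},\,\sq(m)>X^{\delta_2}}\#\{f\in\Sigma_m^\irr: h(f)<Y,|\Delta(f)|<m X\cdot\text{(something)}\}$, where I replace the discriminant cutoff $|\Delta(f)|<n^2X\le m^2X$ by the cruder (and for the tail harmless) bound $|\Delta(f)|<m^2X$. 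Writing $m=q^3\mu$ with $\mu$ cube-free, Proposition~\ref{propnonmaxpp}(c) gives $\nu(\Sigma_m)\ll_\epsilon m^\epsilon/(q^5\mu^2)$, and the squarefull condition $\sq(m)>X^{\delta_2}$ forces the squarefull part of $m$, which is comparable to $q^2$ times the squarefull part of $\mu$, to exceed $X^{\delta_2}$.

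Next I would apply Theorem~\ref{thbound} with the discriminant bound $X\rightsquigarrow m^2X$ and $Y$ fixed, obtaining
\begin{equation*}
\#\{f\in\Sigma_m:|\Delta(f)|<m^2X,\,h(f)<Y\}\ll \nu(\Sigma_m)(Y^2+m)\min\{Y^3,mX^{1/2}\}+mY^2.
\end{equation*}
Since $m\le X^{1/4+\delta_1}$ and $Y=CX^{1/4+\kappa}$, in the relevant range $Y^2\gg m$ and $\min\{Y^3,mX^{1/2}\}\le Y^3$, so the bound simplifies to $\ll \nu(\Sigma_m)Y^5+mY^2$ (I will need to double-check that $mX^{1/2}$ versus $Y^3$ split does not improve things in a way I should track; taking $Y^3$ uniformly is safe but I should verify it yields the stated exponents). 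Summing the $mY^2$ term over $m\le X^{1/4+\delta_1}$ gives $\ll Y^2X^{1/2+2\delta_1}=X^{1+2\kappa+2\delta_1}$, and combined with the $X^\epsilon$ from $\sigma_0$ this is absorbed in the first claimed term $X^{1+2\kappa+2\delta_1-\delta_2/2+\epsilon}$ provided $\delta_2$ is small — so the real work is the main term $Y^5\sum \nu(\Sigma_m)$.

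For that main term, I would estimate $\sum_{m,\ \sq(m)>X^{\delta_2}}\nu(\Sigma_m)\ll_\epsilon X^\epsilon\sum q^{-5}\mu^{-2}$ where the sum is over $q^3\mu\le X^{1/4+\delta_1}$ with the squarefull-part condition. Split into two cases: (i) $q>X^{\delta_2/2}$, in which case $\sum_{q>X^{\delta_2/2}}q^{-5}\sum_\mu \mu^{-2}\ll X^{-2\delta_2}\cdot O(1)$, contributing $Y^5 X^{-2\delta_2+\epsilon}$, which is smaller than the claimed terms; (ii) $q\le X^{\delta_2/2}$, in which case the squarefull part of $\mu$ itself must exceed $X^{\delta_2}/q^2\ge X^{\delta_2}/X^{\delta_2}=1$ — more carefully, exceed roughly $X^{\delta_2-\delta_2}$, so one needs the genuinely squarefull part of the cube-free $\mu$ to be large; writing $\mu=b^2c$ with $c$ squarefree and coprime to $b$ (cube-free forces $b$ squarefree), the condition becomes $q^2b^2\gg X^{\delta_2}$, i.e. $b\gg X^{\delta_2/2}/q$, and then $\sum_{b\gg X^{\delta_2/2}/q}b^{-4}\sum_c c^{-2}\ll (q/X^{\delta_2/2})^3=q^3X^{-3\delta_2/2}$, so after summing $q^{-5}\cdot q^3\ll q^{-2}$ over $q$ this contributes $Y^5X^{-3\delta_2/2+\epsilon}=X^{5/4+5\kappa-3\delta_2/2+\epsilon}$. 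Comparing with the target $X^{1+2\kappa-\delta_2/9+\epsilon}$: I expect that once the honest exponents are computed (being careful that $\nu(\Sigma_m)$ for the $\min\{Y^3,mX^{1/2}\}=mX^{1/2}$ regime is weighted differently, which is exactly why the innocuous-looking $-\delta_2/9$ appears rather than $-3\delta_2/2$), the worst case is governed by $m$ near $X^{1/4+\delta_1}$ and gives precisely the two stated terms.

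The main obstacle, and the step I expect to require the most care, is the bookkeeping in Theorem~\ref{thbound}: the bound there is $\nu(\Sigma_n)(Y^2+n)\min\{Y^3,X^{1/2}\}+nY^2$ with $X$ the discriminant bound, and here the discriminant bound is $n^2X$, so the relevant quantity is $\min\{Y^3,(n^2X)^{1/2}\}=\min\{Y^3,nX^{1/2}\}$; tracking how the two pieces of this minimum interact with the sum $\sum_m \nu(\Sigma_m)(\cdots)$ and with the squarefull-part decomposition $m=q^3b^2c$ is what produces the slightly unusual exponent $\delta_2/9$, and getting that constant right (rather than an easier but weaker $\delta_2/2$-type bound valid only in part of the range) is the delicate point. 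Everything else is a routine convergent-sum estimate using $\sum q^{-5}<\infty$, $\sum \mu^{-2}<\infty$, and $\sigma_0(m)\ll_\epsilon X^\epsilon$.
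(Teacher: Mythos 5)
Your setup (collapsing $d,n$ into $m=dn$ with a $\sigma_0(m)\ll X^\epsilon$ multiplicity, relaxing the discriminant cutoff to $m^2X$, and invoking Theorem \ref{thbound} and Proposition \ref{propnonmaxpp}) matches the paper, but the proof breaks at exactly the step you flagged as delicate, and the final paragraph does not repair it. When you bound $\min\{Y^3,mX^{1/2}\}$ by $Y^3$, the main term becomes $Y^5\sum_{\sq(m)>X^{\delta_2}}\nu(\Sigma_m)$, and since $Y^5=X^{5/4+5\kappa}$ your own computation yields at best $X^{5/4+5\kappa-c\delta_2+\epsilon}$ (you got $c=3/2$). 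For small $\delta_2$ this exceeds the claimed $X^{1+2\kappa-\delta_2/9+\epsilon}$ — indeed it exceeds $X$ — and small $\delta_2$ is forced later, since the final error $E(\kappa,\delta_1,\delta_2)$ contains $X^{3/4+3\kappa+2\delta_2+\epsilon}$, so $\delta_2<1/8$. The correct move is the opposite branch of the minimum: bound $(Y^2+m)\min\{Y^3,mX^{1/2}\}\ll Y^2\cdot mX^{1/2}$, so the prefactor is $Y^2X^{1/2}=X^{1+2\kappa}$ and the sum to control is $\sum_{\sq(m)>X^{\delta_2}}\nu(\Sigma_m)\,m$. Writing $m=rs$ with $r$ squarefull, $s$ squarefree, $(r,s)=1$, Proposition \ref{propnonmaxpp}(c) gives $\nu(\Sigma_m)m\ll_\epsilon X^\epsilon r^{-2/3}s^{-1}$; summing over $s$ costs only $X^\epsilon$, and since a squarefull $r$ has a square divisor $y^2$ with $y\ge r^{1/3}$, $\sum_{r>X^{\delta_2}}r^{-2/3}\ll\sum_{y>X^{\delta_2/3}}y^{-4/3}\ll X^{-\delta_2/9}$. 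This is where the exponent $\delta_2/9$ actually comes from; it is not recoverable from the $Y^3$ branch by more careful bookkeeping, because the loss is in the prefactor $Y^5$ versus $Y^2X^{1/2}$, not in the $\delta_2$-saving.

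There is a second, smaller error: you claim the $\sum mY^2$ contribution, which you bound by $X^{1+2\kappa+2\delta_1}$, is ``absorbed'' into $X^{1+2\kappa+2\delta_1-\delta_2/2+\epsilon}$. It is not, for any fixed $\delta_2>0$ and $\epsilon<\delta_2/2$. You must use the constraint $\sq(m)>X^{\delta_2}$ here as well: the number of $m\le X^{1/4+\delta_1}$ with squarefull part exceeding $X^{\delta_2}$ is $\ll X^{1/4+\delta_1-\delta_2/2}$ (sum $1/r$ over squarefull $r>X^{\delta_2}$), so $\sum mY^2\ll Y^2X^{1/4+\delta_1}\cdot X^{1/4+\delta_1-\delta_2/2}=X^{1+2\kappa+2\delta_1-\delta_2/2}$, which is precisely the first term of the lemma. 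With these two corrections your argument becomes the paper's proof; as written, both stated exponents are out of reach.
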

\begin{proof}
Applying Theorem \ref{thbound}, we see that the left hand side of the
above equation is bounded by

\begin{align*}
&\ll_\epsilon X^{\epsilon}\displaystyle\sum_{\substack{m\leq X^{1/4+\delta_1}\\{\rm sq}(m)>X^{\delta_2}}}\Bigl( mY^2+\nu(\Sigma_m)(Y^2+m)\min\{Y^3,mX^{1/2}\}\Bigr)\\
&\ll X^{1+2\kappa+2\delta_1-\delta_2/2+\epsilon}+X^{1+2\kappa}\displaystyle\sum_{\substack{m\leq X^{1/4+\delta_1}\\{\rm sq}(m)>X^{\delta_2}}}\nu(\Sigma_m)m.\\
\end{align*}
Writing $m=rs$, where $r$ is squarefull, $s$ is squarefree, and
$(r,s)=1$, we get
\begin{equation*}
\begin{array}{rcl}
\displaystyle\sum_{\substack{m\leq X^{1/4+\delta_1}\\{\rm sq}(m)>X^{\delta_2}}}\nu(m)m&\ll_\epsilon&
\displaystyle X^{\epsilon}\sum_{r>X^{\delta_2}}\sum_{s\leq X^{1/4+\delta_1}/r}\frac{rs}{r^{5/3}s^{2}}\\[.2in]
&\ll_\epsilon&\displaystyle
X^\epsilon\sum_{r>X^{\delta_1}}r^{-2/3},
\end{array}
\end{equation*}
where the first equality follows from Proposition \ref{propnonmaxpp}.
Now, a squareful number $r$ has a square factor $y^2$ with $y\geq
r^{1/3}$, so we have
\begin{equation*}
\begin{array}{rcl}
\displaystyle\sum_{\substack{m\leq X^{1/4+\delta_1}\\{\rm sq}(m)>X^{\delta_2}}}
\nu(m)m&\ll_\epsilon&
\displaystyle X^\epsilon\sum_{y>X^{\delta_2/3}}y^{-4/3}\\[.25in]
&\ll_\epsilon& \displaystyle X^{-\delta_2/9+\epsilon},
\end{array}
\end{equation*}
concluding the proof of the lemma.
\end{proof}

We are now ready to prove the main result of this section.

\vspace{.1in}
\noindent {\bf Proof of Theorem \ref{theorem3}:} Equation \eqref{eqsm}
and Lemmas \ref{lemtailestimate} and \ref{lemsmsq} imply that we have
\begin{equation*}
\begin{array}{rcl}
\displaystyle\sum_{\substack{[K:\Q=3]\\|\Delta(K)|\leq X}}|S_K(Y)|
&=&\displaystyle
\sum_{\substack{n,d\geq 1\\nd\leq X^{1/4+\delta_1}\\{\rm sq}(dn)\leq
    X^{\delta_2}}} \mu(d)\#\{f\in \Sigma_{dn}^\irr\,:\,h(f)<Y,
|\Delta(f)|<n^2X\}+O_\epsilon(E_1)
\\[.3in]&=&
\displaystyle
\sum_{\substack{n,d\geq 1\\nd\leq X^{1/4+\delta_1}\\{\rm sq}(dn)\leq X^{\delta_2}}}
\mu(d)\#\{f\in \Sigma_{dn}\,:\,h(f)<Y,|\Delta(f)|<n^2X\}
+O_\epsilon(E_1+X^{3/4+3\kappa+\epsilon}),
\end{array}
\end{equation*}
where we use Proposition \ref{propred3} to prove that the number of
reducible elements $\Sigma_{dn}$ is negligible, and where the error
term $E_1$ is defined to be
\begin{equation*}
E_1:=X^{1+6\kappa-2\delta_1+\epsilon}+X^{1+2\kappa+2\delta_1-\delta_2/2+\epsilon}
+ X^{1+2\kappa-\delta_2/9+\epsilon}.
\end{equation*}
Write $nd=m\ell$, where $m$ is squarefree, $\ell$ is squarefull, and
$(m,\ell)=1$. Estimating the number of irreducible elements in
$\Sigma_{dn}$ having bounded height and discriminant using
Theorem~\ref{thbound}, we obtain
%
\begin{equation*}
\{f\in \Sigma_{dn}\,:\,h(f)<Y,|\Delta(f)|<n^2X\}
=\nu(\Sigma_{dn})\Vol(V(\R)_{n^2X\,Y})
+O_\epsilon(Y^{3+\epsilon}\ell/m^{1-\epsilon}+Y^2m^{\epsilon}).
\end{equation*}
Adding the error term in the right hand side of the above equation
over $m\leq X^{1/4+\delta_1}$ and $\ell\leq X^{\delta_2}$, we obtain
the following estimate.
%
\begin{equation*}
\begin{array}{rcl}
\displaystyle\sum_{\substack{[K:\Q=3]\\|\Delta(K)|\leq X}}|S_K(Y)|&=&
\displaystyle
\sum_{\substack{n,d\geq 1\\nd\leq X^{1/4+\delta_1}\\{\rm sq}(dn)\leq X^{\delta_2}}}
\mu(d)\nu(\Sigma_{dn})\Vol(V(\R)_{n^2X\,Y})+O(E(\kappa,\delta_1,\delta_2))
\\[.3in]&=&\displaystyle
\sum_{n\geq 1}\sigma(n)\Vol(V(\R)_{n^2X\,Y})+O(E(\kappa,\delta_1,\delta_2)),
\end{array}
\end{equation*}
where
\begin{equation}\label{eqE}
E(\kappa,\delta_1,\delta_2):=
X^{1+6\kappa-2\delta_1+\epsilon}+X^{1+2\kappa+2\delta_1-\delta_2/2+\epsilon}
+ X^{1+2\kappa-\delta_2/9+\epsilon}
+X^{3/4+3\kappa+2\delta_2+\epsilon}+X^{3/4+2\kappa+\delta_1+\epsilon}.
\end{equation}
Since it is clearly possible to pick positive constants $\kappa$,
$\delta_1$, and $\delta_2$ such that
$E(\kappa,\delta_1,\delta_2)=o(X)$, we recover Theorem \ref{theorem3}.


\end{document}